\newtheorem{theorem}{Theorem}
\newtheorem{lemma}[theorem]{Lemma}
\newtheorem{corollary}[theorem]{Corollary}
\newtheorem{question}[theorem]{Question}
\theoremstyle{definition}
\newtheorem{definition}[theorem]{Definition}
\newtheorem{remark}[theorem]{Remark}
\begin{document}
\title[$2$-rotund norms for Baernstein spaces and their duals]{$2$-rotund norms  for generalized\\  Baernstein spaces and their duals}



\author[S. J. Dilworth]{S. J. Dilworth}

\address{Department of Mathematics, University of South Carolina, Columbia,
SC 29208, USA}

\email{dilworth@math.sc.edu}


\author{Denka Kutzarova}

\address{Department of Mathematics, University of Illinois Urbana-Champaign,
Urbana, IL 61807, USA; Institute of Mathematics and Informatics, Bulgarian Academy of Sciences, Sofia, Bulgaria}

\email{denka@illinois.edu}


\thanks{ The first author was supported by Simons Foundation Collaboration Grant No. 849142.
 The second author was supported by Simons Foundation Collaboration Grant No. 636954.}


\keywords{$2$-rotundity, renorming, $2R$ norm,  Baernstein space, reflexivity.}

\subjclass[2020]{Primary 46B20; Secondary 46B03, 46B26.}

\begin{abstract} We consider a  generalized Baernstein space associated to  a compact family of finite subsets of an uncountable set.  We  show that for certain transfinitely defined families such spaces admit an 
equivalent $2$-rotund  norm. We also show that for an arbitrary  family the dual space admits an equivalent $2$-rotund  norm.
\end{abstract}

\maketitle  
\section{Introduction} \label{sec: Intro} The  notions of $2$-rotund and weakly $2$-rotund norms   were introduced by Milman \cite{M} and are defined as follows.
 \begin{definition}   Let $X$ be a Banach space. We say that a norm $\|\cdot\|$ on $X$ is $2$-rotund  ($2R$) (resp.\, weakly $2$-rotund ($W2R$))   if for every $(x_n) \subset X$ such that $\|x_n\| \le 1$ ($n \ge 1$) and
$$ \lim_{m,n \rightarrow \infty} \|x_m + x_n\| = 2,$$
there exists $x \in X$ such that $x = \lim_{n \rightarrow \infty} x_n$ strongly (resp.\, weakly). \end{definition}

It follows from a characterization of reflexivity due to  James \cite{J} that if  $X$ admits an equivalent $W2R$ norm then $X$ is reflexive. H\'ajek and Johanis proved the converse:   every reflexive Banach space admits an equivalent $W2R$ norm \cite{HJ}.  
 Odell and Schlumprecht \cite{OS} proved that every separable   reflexive Banach space $X$ admits an equivalent $2R$ norm (cf.\,\cite{G}). However,  it is an open question whether every  reflexive Banach space admits an equivalent 
$2R$ norm.

Let $\Gamma$ be an infinite set. Throughout,  $\mathcal{F}$ denotes a collection of finite subsets of $\Gamma$ satisfying the following: \begin{itemize} \item $\mathcal{F}$ contains all singletons;  
\item $\mathcal{F}$ is hereditary, i.e., if $F \in \mathcal{F}$ and $G \subseteq F$ then $G \in \mathcal{F}$;\item $\mathcal{F}$ is compact, i.e., $\{ 1_F \colon F \in \mathcal{F}\}$
is a compact subset of $\{0,1\}^{\Gamma}$ in the topology of pointwise convergence. \end{itemize}
Let $(e_\gamma)_{\gamma \in \Gamma}$ denote the unit vector basis of $c_{00}(\Gamma)$ and let $(e_\gamma^*)$ denote the dual basis.  We define a norm $\|\cdot\|$ on $c_{00}(\Gamma)$ as follows:
\begin{equation} \label{eq: Baernsteinnorm} \|\sum a_\gamma e_\gamma \| = \sup (\sum_{i=1}^n (\sum_{\gamma \in F_i} |a_\gamma|)^2)^{1/2}, \end{equation}
where the supremum is taken over all $n \ge 1$ and all disjoint $F_i \in \mathcal{F}$ ($1 \le i \le n$).

The generalized  Baernstein space $(B(\mathcal{F}),\|\cdot\|)$ is the completion  of $c_{00}(\Gamma)$ with respect to $\|\cdot\|$.  Note that   $(e_\gamma)_{\gamma \in \Gamma}$ is a $1$-unconditional basis of $B(\mathcal{F})$ and that $\|\cdot\|$ satisfies a lower $2$-estimate for disjointly supported vectors $x,y$:
\begin{equation} \label{eq: lower2estimate}  \|x + y\|^2 \ge \|x\|^2 + \|y\|^2. \end{equation} 

The first  space of this type was introduced by Baernstein \cite{B}  with  $\Gamma= \mathbb{N}$ and $\mathcal{F}= \mathcal{S}_1= \{E \subset \mathbb{N}\colon |E| \le \min E\}$ (the Schreier family),    with the extra assumption that $\max F_i < \min F_{i+1}$ for  $1 \le i \le n-1$ in \eqref{eq: Baernsteinnorm}. It was the first example of a reflexive
 Banach space with a  normalized basis  (weakly null by reflexivity) whose arithmetic means do not converge strongly to zero.

The space $B(\mathcal{F})$ is reflexive (for arbitrary $\Gamma$ and $\mathcal{F}$).  For completeness we present a proof at the end of the paper. 

 The norm of  $(B(\mathcal{F},\|\cdot\|)$ and its dual norm $\|\cdot\|_*$  are not $2R$ in general. For example, for the original Baernstein space,  we have $$\|e_n + e_m\| =2,\quad \|e^*_3 +  e_n^*\|_*=1,  \|(e_3^* + e_n^* )+(e_3^* +  e_m^*)\|_* = 2\quad (m,n > 3),$$
and hence $\|\cdot\|$ and $\|\cdot\|_*$ are not $2R$ norms.

 The following question  is open to the best of our knowledge.

\begin{question} Suppose $\Gamma$ is uncountable. Does $B(\mathcal{F})$ have an equivalent $2R$ norm?
\end{question} In Section~\ref{sec: transfinit}, motivated by the Schreier hierarchy introduced in \cite{AA},  we present  a general method for defining, for each countable ordinal $\alpha$,  a family $\mathcal{F}_\alpha$ for certain uncountable $\Gamma$.
The construction is similar to that of  the transfintely defined families introduced  in \cite{AM}.  In Section~\ref{sec: transfinite2R} we prove  that, for each countable ordinal $\alpha$, $B(\mathcal{F}_\alpha)$ has an equivalent $2R$ norm.

In Section~\ref{sec: duals} we prove, for arbitrary $\Gamma$ and $\mathcal{F}$,  that  $B(\mathcal{F})^*$ admits an equivalent $2R$ norm. The renorming is essentially the same as the $W2R$ renorming given in \cite{HJ}.

As an application of these results  we prove that the space constructed by Kutzarova and Troyanski \cite{KT} (based on a family of sets introduced in \cite{BS}) which does not admit an equivalent  norm that is either  uniformly rotund in every direction or uniformly differentiable in every direction   does admit an equivalent $2R$ norm.

In forthcoming articles we prove positive results for other classes of spaces. In particular, in   \cite{DKM}  we consider the existence of equivalent symmetric $2R$ norms for spaces with a symmetric basis.

\section{Transfinitely defined families} \label{sec: transfinit}  \begin{itemize} \item
 Let  $S$ be any set of cardinality at least  $2$ and let $\overline{S} := S^{\mathbb{N}}$.  \label{sec: transfinite}  \item
For distinct $p = (p(i))_{i=1}^\infty \in \overline{S}$ and $q = (q(i))_{i=1}^\infty \in \overline{S}$, let
$d(p,q) = 1$ if $p(1) \ne q(1)$ and, for $k \ge 2 $, let $d(p,q) =  k$ if $p(k) \ne q(k)$ and $p(j) = q(j)$ for $1 \le j \le k-1.$  
\item For $A \subset \overline{S}$, with $|A| \ge 2$, let $$A^\sharp = \min \{d(p,q) \colon p,q \in A, p \ne q\}.$$
We define, for each countable ordinal $\alpha$, a  hereditary family $\mathcal{F}_\alpha$ of finite subsets of  $\overline{S}$.
\item Let $$\mathcal{F}_0  = \{ \emptyset \} \cup \{\{p\} \colon p \in \overline{S}\}.$$
\item  If $k \ge 1$ and  $\mathcal{F}$ is any collection of finite subsets of $\overline{S}$ satisfying the conditions set out in the Introduction, let
$$\mathcal{F}^{(k)} = \mathcal{F}_0 \cup \{A \in \mathcal{F} \colon A^{\sharp} \ge k\}.$$
Note that since  $\mathcal{F}$ is hereditary,  $\mathcal{F}^{(k)}$ is also hereditary.
\item If $\alpha = \beta^+$ is a successor ordinal, let $\mathcal{F}_\alpha$ be any hereditary family satisfying the following:
\begin{itemize} \item $\mathcal{F}_\beta \subseteq \mathcal{F}_\alpha$.
\item If  $A \in \mathcal{F}_\alpha$ and  $|A| \ge 2$, then  there exist  $A_i \in \mathcal{F}_\beta$ ($1 \le i \le A^{\sharp}$) such that
$$ A = \cup_{i=1}^{A^{\sharp}} A_i.$$
\end{itemize}
\item If $\alpha$ is a limit ordinal, choose $\alpha_r \uparrow \alpha$ ($r \ge 1$) and define
$$ \mathcal{F}_\alpha = \cup_{r=1}^\infty \mathcal{F}^{(r)}_{\alpha_r}.$$ Note that, for each $k \ge 1$,
$$\mathcal{F}^{(k)}_\alpha = \cup_{r=1}^\infty \mathcal{F}^{(r \vee k)}_{\alpha_r},$$
where $r \vee k := \max(r,k)$.

\end{itemize}

\section{$B(\mathcal{F}_\alpha^{(k)})$ admits an equivalent  $2R$ norm} \label{sec: transfinite2R}

\begin{theorem} \label{thm: transfinite2R} For each countable ordinal  $\alpha$ and $k \ge 1$, $B(\mathcal{F}_\alpha^{(k)})$ admits a $2R$ renorming. \end{theorem}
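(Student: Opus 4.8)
The plan is to argue by transfinite induction on $\alpha$, proving simultaneously for all $k \ge 1$ that $B(\mathcal{F}_\alpha^{(k)})$ carries a $2R$ norm, while tracking the equivalence constants so that they stay uniform across the auxiliary $\ell_2$-sums the induction produces. The starting point is a Pythagorean decomposition over the tree $\overline{S} = S^{\mathbb N}$. For $t \in S^{k-1}$ write $[t] = \{p \in \overline{S} : p(j) = t(j)\ (1 \le j \le k-1)\}$ and let $P_t$ be the basis projection onto the coordinates in $[t]$. Since every $A \in \mathcal{F}_\alpha^{(k)}$ with $|A| \ge 2$ satisfies $A^\sharp \ge k$, all its elements share the same first $k-1$ coordinates, so $A$ lies in a single $[t]$; hence the supremum in \eqref{eq: Baernsteinnorm} splits and
$$\|x\|_{\mathcal{F}_\alpha^{(k)}}^2 = \sum_{t \in S^{k-1}} \|P_t x\|_{[t]}^2,$$
where $\|\cdot\|_{[t]}$ is the norm of the subtree space $B(\mathcal{F}_\alpha|_{[t]})$, $\mathcal{F}_\alpha|_{[t]} = \{A \in \mathcal{F}_\alpha : A \subseteq [t]\}$. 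Thus $B(\mathcal{F}_\alpha^{(k)})$ is an isometric $\ell_2$-sum of subtree spaces, and the first reduction is a lemma that an $\ell_2$-sum of spaces carrying uniformly equivalent $2R$ norms again admits a $2R$ norm; for a fixed sequence only countably many coordinates are active, so the sequential definition is unaffected by the cardinality of $\Gamma$.

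The base case $\alpha = 0$ is immediate, since $\mathcal{F}_0^{(k)} = \mathcal{F}_0$ gives $B(\mathcal{F}_0^{(k)}) = \ell_2(\overline{S})$, which is Hilbertian and hence $2R$. For the inductive renorming I would combine, in a weighted-$\ell_2$ fashion, three ingredients: (i) the pure Hilbert norm $\|\cdot\|_{\ell_2(\overline{S})}$ with a small fixed weight; (ii) the depth truncations $N_m(x)^2 = \sum_{t \in S^m}\|P_t x\|^2$, which by the lower $2$-estimate \eqref{eq: lower2estimate} satisfy $N_{m+1} \le N_m \le \|\cdot\|$ and decrease from $N_{k-1} = \|\cdot\|_{\mathcal{F}_\alpha^{(k)}}$ down to $\|\cdot\|_{\ell_2(\overline{S})}$; and (iii) the inductively supplied $2R$ norms on the lower-ordinal pieces that the defining clauses of Section~\ref{sec: transfinit} attach to the family. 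In the successor case $\alpha = \beta^+$ the key structural fact is that every $A \in \mathcal{F}_{\beta^+}^{(k)}$ with $A^\sharp = j \ge k$ lies in some $[t]$, $t \in S^{j-1}$, and splits as $A = \bigcup_{i=1}^{j} A_i$ with $A_i \in \mathcal{F}_\beta$ and $A_i^\sharp \ge j$, so that $A_i \in \mathcal{F}_\beta^{(j)}$; thus the $\mathcal{F}_{\beta^+}$-norm arises from the $\mathcal{F}_\beta$-level by a single Baernstein clustering, and the inductive hypothesis furnishes $2R$ norms for the $\mathcal{F}_\beta^{(j)}$-pieces feeding ingredient (iii). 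The limit case is handled analogously via $\mathcal{F}_\alpha^{(k)} = \bigcup_r \mathcal{F}_{\alpha_r}^{(r \vee k)}$ with $\alpha_r < \alpha$.

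Equivalence of the combined norm with $\|\cdot\|$ is the easy half: each ingredient is dominated by $\|\cdot\|$ via the lower $2$-estimate, while the depth-$(k-1)$ term $N_{k-1}$ recovers $\|\cdot\|$ exactly, so the weighted sum is squeezed between two positive multiples of $\|\cdot\|^2$. The $2R$ verification carries the work. Given a sequence with $\|x_n\| \le 1$ and $\|x_m + x_n\| \to 2$ in the combined norm, the standard fact that the $2R$ condition propagates to every summand of a squared-$\ell_2$ norm yields simultaneously: coordinatewise $\ell_2$-convergence from (i); the $2R$-condition at each finite depth from (ii); and, from (iii), convergence of the lower-ordinal data organized along the clustering. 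The remaining task is to upgrade this convergence on every finite part to convergence in $\|\cdot\|$ itself, and here the compactness of $\mathcal{F}$ is essential: it prevents the admissible sets witnessing a putative nonconvergent sequence from escaping to infinite complexity, forcing the norm tails to be uniformly small and delivering strong convergence.

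I expect the main obstacle to be precisely the successor step, namely transporting a $2R$ norm through one Baernstein clustering $A = \bigcup_{i=1}^{A^\sharp} A_i$. This is exactly the operation that destroys $2R$-ness in the unrenormed space, as the identities $\|e_n + e_m\| = 2$ already show, so the renorming must genuinely repair it; moreover the repair must be quantified uniformly in $k$ and in the node $t$ in order for the $\ell_2$-sums and the limit unions to close the induction. Securing these \emph{uniform} constants, rather than the single-space $2R$ property, is the delicate bookkeeping that the argument must sustain through all countable ordinals $\alpha$.
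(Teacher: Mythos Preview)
Your overall framework --- transfinite induction on $\alpha$, renorming by a weighted $\ell_2$-sum of the original norm with inductively supplied $2R$ norms on the lower-ordinal pieces --- is the same as the paper's.  But the proposal leaves the one essential step unproved.  In the paper the entire content sits in Lemmas~\ref{lem: limitordinalkeylemma} and~\ref{lem: successorordinalkeylemma}: if $(x_n)$ satisfies the $2R$ condition for $\|\cdot\|_{\alpha,k}$ \emph{and} already converges in every lower norm $\|\cdot\|_{\alpha_r,r\vee k}$ (resp.\ $\|\cdot\|_{\beta,k}$), then it converges in $\|\cdot\|_{\alpha,k}$.  You name this as ``the remaining task'' and then invoke compactness of $\mathcal{F}$ generically.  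That is the gap.  Compactness alone cannot suffice (every $\mathcal{F}$ in the paper is compact, yet the unrenormed $\|\cdot\|$ already fails $2R$); what the paper actually exploits is the specific combinatorial fact that any $F\in\mathcal{F}_\alpha^{(k)}$ meeting a \emph{fixed} finite support in at least two points has $F^\sharp$ bounded by the diameter of that support, hence lies in finitely many of the lower families.  This drives the uniform estimates \eqref{eq: uniform2}--\eqref{eq: uniform3}, which together with the uniform convexity of $\ell_2$ and a gliding-hump reduction to disjointly supported perturbations yield the upgrade.  None of this is present in the proposal.

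Two further points.  First, your worry about \emph{uniform} equivalence constants is a red herring: the paper simply absorbs whatever constants $C_r$ arise into the weights $1/(2^rC_r^2)$, so that $\|\cdot\|_{\alpha,k}\le|\!|\!|\cdot|\!|\!|_{\alpha,k}\le\sqrt{2}\,\|\cdot\|_{\alpha,k}$ automatically at every stage; no bookkeeping is required.  Second, your additional ingredients do not advance the induction.  The Pythagorean decomposition over $S^{k-1}$ is correct but the summands $B(\mathcal{F}_\alpha|_{[t]})$ carry no ordinal reduction --- they are not among the spaces the inductive hypothesis covers.  The depth truncations $N_m$ are just the norms $\|\cdot\|_{\mathcal{F}_\alpha^{(m+1)}}$ themselves, which are not known to be $2R$ until the theorem is proved, so inserting them into the sum contributes nothing.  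The paper's renorming is the minimal one: $\|\cdot\|_{\alpha,k}^2+|\!|\!|\cdot|\!|\!|_{\beta,k}^2$ in the successor case and $\|\cdot\|_{\alpha,k}^2+\sum_r 2^{-r}C_r^{-2}|\!|\!|\cdot|\!|\!|_{\alpha_r,r\vee k}^2$ in the limit case, with all of the work in the upgrade lemmas.
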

We shall use the following  characterization of $2$-rotundity (see e.g., \cite[II.6.4]{DGZ} or \cite{HJ}): $\|\cdot\|$ is a $2R$ norm on $X$  if for all  $(x_n) \subset X$ such that \begin{equation} \label{eq: alternativedef}
\lim_{m,n \rightarrow \infty} [\| x_m + x_n\|^2 - 2(\|x_m\|^2 + \|x_n\|^2)] = 0, \end{equation}
 there exists $x \in X$ such that $x=\lim_{n \rightarrow \infty} x_n$ strongly.

 For $x \in B(\mathcal{F})$, the support of $x$, denoted $\operatorname{supp} x$, is defined by
$$ \operatorname{supp} x = \{\gamma \in \overline{S}: e_\gamma^*(e_\gamma) \ne 0\}.$$

Let $\|\cdot\|_{\alpha,k}$ denote the norm in $B(\mathcal{F}_\alpha^{(k)}).$

\begin{lemma} \label{lem: keyfirstlemma} Let $\alpha$ be a limit ordinal (with $\alpha_r \uparrow \alpha$ as above) and let $k \ge 1$. Suppose that  $\|x_n\|_{\alpha, k} \le 1$ ($n\ge1$)
and that \begin{equation}\label{eq: lim_m_n}
 \lim_{m,n\rightarrow \infty}  \|x_m + x_n\|_{\alpha,k} = 2. \end{equation}   Then, for some $r \ge 1$, $\limsup_{n\rightarrow\infty}\|x_n\|_{\alpha_r, r \vee k} >0$
\end{lemma}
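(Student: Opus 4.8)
The plan is to argue by contraposition: assume that $\lim_{n\to\infty}\|x_n\|_{\alpha_r,r\vee k}=0$ for every $r\ge 1$ and show that $\|x_m+x_n\|_{\alpha,k}$ cannot tend to $2$. The basic tool is a level decomposition of admissible collections. If $(F_i)$ is a collection of disjoint sets in $\mathcal{F}_\alpha^{(k)}=\bigcup_r\mathcal{F}^{(r\vee k)}_{\alpha_r}$ witnessing (up to $\varepsilon$) the value of $\|x\|_{\alpha,k}$, then each $F_i$ with $|F_i|\ge 2$ satisfies $r:=F_i^{\sharp}\ge k$ and, as one checks from the definitions, $F_i\in\mathcal{F}^{(r)}_{\alpha_r}=\mathcal{F}^{(r\vee k)}_{\alpha_r}$. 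Grouping the $F_i$ by the value $r=F_i^{\sharp}$ and treating the singletons separately (they are dominated by the $\ell_2$-norm, which is in turn dominated by every $\|\cdot\|_{\alpha_r,r\vee k}$), I obtain for a single vector the termwise bound $\sum_{i:\,F_i^{\sharp}=r}\big(\sum_{\gamma\in F_i}|x(\gamma)|\big)^2\le\|x\|_{\alpha_r,r\vee k}^2$. In particular, since the singleton contribution is bounded by $\|x_n\|_2^2\le\|x_n\|_{\alpha_1,1\vee k}^2$, the contrapositive hypothesis already forces $\|x_n\|_2\to 0$.

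Next I would exploit $\|x_m+x_n\|_{\alpha,k}\to 2$ together with $\|x_m\|,\|x_n\|\le 1$. For a near-optimal collection $(F_i)$ for $x_m+x_n$, put $a_i=\sum_{\gamma\in F_i}|x_m(\gamma)|$ and $b_i=\sum_{\gamma\in F_i}|x_n(\gamma)|$. Then $\sum_i(a_i+b_i)^2$ is close to $4$ while $\sum_i a_i^2\le 1$ and $\sum_i b_i^2\le 1$, so the Cauchy--Schwarz inequality is nearly saturated; this yields $\sum_i a_i^2,\sum_i b_i^2\to 1$ and, crucially, $\sum_i(a_i-b_i)^2\to 0$ as $m,n\to\infty$. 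Thus a single collection nearly norms both $x_m$ and $x_n$, with the two weight sequences almost equal.

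The heart of the matter, and the step I expect to be the main obstacle, is that the norm $\|\cdot\|_{\alpha,k}$ genuinely mixes all levels, so a priori the norming mass could escape to arbitrarily high $F_i^{\sharp}$, leaving every fixed level $r$ asymptotically empty. To defeat this I would break the symmetry between the two indices. Fix $\varepsilon>0$ and, using $\|x_n\|_2\to 0$, choose one index $m_0$ (as large as needed) with $\|x_{m_0}\|_2<\sqrt\varepsilon$; approximate $x_{m_0}$ by a finitely supported $x'$ with $\|x_{m_0}-x'\|_{\alpha,k}<\varepsilon$, and let $R_0$ exceed the finite quantity $\max\{E^{\sharp}:E\subseteq\operatorname{supp}x',\ |E|\ge 2\}$. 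Any set $F$ with $|F|\ge 2$ and $F^{\sharp}>R_0$ then meets $\operatorname{supp}x'$ in at most one point, so a Minkowski-inequality split $|x_{m_0}|\le|x'|+|x_{m_0}-x'|$ gives $\sum_{i:\,F_i^{\sharp}>R_0}a_i^2\le(\|x'\|_2+\|x_{m_0}-x'\|_{\alpha,k})^2=O(\varepsilon)$. In other words, for this fixed $m_0$ essentially all of the weight $\sum_i a_i^2\approx 1$ sits on the finitely many levels $k\le r\le R_0$.

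Finally I would transfer this localization to $x_n$ and conclude. Since $\sum_i(a_i-b_i)^2\to 0$, the weight of $b$ on the low levels $\{k,\dots,R_0\}$ is, for all large $n$, within $o(1)$ of that of $a$, hence at least $1-O(\sqrt\varepsilon)$; combined with the level bound $\sum_{i:\,F_i^{\sharp}=r}b_i^2\le\|x_n\|_{\alpha_r,r\vee k}^2$ this gives $\sum_{r=k}^{R_0}\|x_n\|_{\alpha_r,r\vee k}^2\ge 1-O(\sqrt\varepsilon)$. As there are only finitely many levels in $[k,R_0]$, for each large $n$ some level carries a definite proportion, and by pigeonhole a single level $r^*$ does so for infinitely many $n$; thus $\limsup_n\|x_n\|_{\alpha_{r^*},r^*\vee k}>0$, contradicting the contrapositive hypothesis. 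The delicate points I would need to verify carefully are the membership claim $F_i\in\mathcal{F}^{(r)}_{\alpha_r}$ underlying the level decomposition and the handling of sets that intersect a fixed support in a single point, which is exactly where the $\ell_2$-smallness of $x_{m_0}$ is used.
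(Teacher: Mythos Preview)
Your argument is correct in outline and rests on the same key geometric observation the paper uses: an admissible set $F$ with $F^\sharp$ large can meet a fixed finite support in at most one point, so after fixing one index and a finite-support approximation, all norming mass at high levels is controlled by the $\ell_2$-norm. The one genuine inaccuracy is the membership claim $F_i\in\mathcal{F}^{(r)}_{\alpha_r}$ with $r=F_i^\sharp$: the transfinite construction does not guarantee that the families $\mathcal{F}_{\alpha_s}$ are nested in $s$, so from $F_i\in\mathcal{F}^{(s\vee k)}_{\alpha_s}$ for some $s\le F_i^\sharp$ you cannot pass to $s=F_i^\sharp$. The fix is immediate: group the low-level $F_i$'s by the \emph{minimal} $s$ with $F_i\in\mathcal{F}^{(s\vee k)}_{\alpha_s}$; since $s\le F_i^\sharp\le R_0$, you still obtain $\sum_{i:\,F_i^\sharp\le R_0}b_i^2\le\sum_{s=1}^{R_0}\|x_n\|_{\alpha_s,s\vee k}^2$, and the pigeonhole conclusion goes through unchanged.

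The paper organizes the same idea differently. It first passes, via a gliding hump, to a subsequence with pairwise disjoint finite supports; it then fixes $n$, splits any admissible collection for $x_n+x_m$ according to whether each $F_i$ meets $\operatorname{supp}x_n$ in at least two points (forcing $F_i^\sharp\le N$ and hence $F_i\in\bigcup_{r\le N}\mathcal{F}^{(r\vee k)}_{\alpha_r}$) or in exactly one point (controlled by $\|x_n\|_2$), and bounds $\|x_n+x_m\|_{\alpha,k}^2\le 2+o(1)$ directly, in fact obtaining $\limsup_n\limsup_m\|x_n+x_m\|_{\alpha,k}\le\sqrt{2}$. Your route avoids the gliding hump reduction by using the $\ell_2$-parallelogram identity $\sum(a_i+b_i)^2+\sum(a_i-b_i)^2=2\sum a_i^2+2\sum b_i^2$ to transfer the near-norming weights from $x_{m_0}$ to $x_n$; this is a pleasant variant, though it does not recover the quantitative $\sqrt{2}$ bound that the paper later exploits (Remark~\ref{rem: improvementoflemma}) in the proof of Lemma~\ref{lem: limitordinalkeylemma}.
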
 \begin{proof} Suppose, to derive a contradiction, that $\lim_{n \rightarrow \infty}\|x_n\|_{\alpha_r, r \vee k} =0$ for all $r \ge 1$;  in particular, 
$x_n \rightarrow 0$ in $\ell_2(\overline{S})$. Hence, by a gliding hump argument, approximating by finitely disjointly  supported vectors, and after passing to a subsequence and relabelling, we may assume that $ \operatorname{supp} x_n$ is finite  and that $ \operatorname{supp} x_n \cap  \operatorname{supp} x_m = \emptyset$ if $m \ne n$.

Fix $n \ge 1$ and $F \in \mathcal{F}^{(k)}_\alpha$  satisfying
$$ |F \cap  \operatorname{supp} x_n| \ge 2.$$ Let
$$N = \max \{ d(p,q) \colon p,q \in  \operatorname{supp} x_n, p \ne q\}.$$ It follows that $F^\sharp \le N$, and hence
$$F \in \cup_{r=1}^N \mathcal{F}_{\alpha_r}^{(r \vee k)}.$$

Let $$x_n = \sum a_\gamma e_\gamma$$ and, for $m > n$, 
$$x_m = \sum b^m_\gamma e_\gamma.$$
Since $x_n \rightarrow 0$ in $\ell_2(\overline{S})$,
\begin{equation} \label{eq: sumsofsquares}
\lim_{n \rightarrow \infty}  \sum a_\gamma^2 =0 .\end{equation}
Since the supports of the $x_m$'s are disjoint, we may assume that $a_\gamma\ge0$ and $b^m_\gamma\ge0$. 

By assumption, $\|x_m\|_{\alpha_r, r \vee k} \rightarrow 0$  as $m \rightarrow \infty$ for all $r \ge 1.$  Hence  \begin{equation} \label{eq: uniform1}
\lim_{m \rightarrow \infty} \sum_{\gamma \in  F} b^m_\gamma = 0. \end{equation}
\textit{uniformly} over all $F \in \mathcal{F}^{(k)}_\alpha$ satisfying $|F \cap \operatorname{supp} x_n| \ge 2$.

Note that if $F_1,F_2,\dots,F_s$ are \textit{disjoint}  sets in $\mathcal{F}^{(k)}_\alpha$ satisfying $|F_i \cap \operatorname{supp} x_n| \ge 2$  ($1 \le i \le s$)
then $s \le |\operatorname{supp} x_n|$. Hence \eqref{eq: uniform1} implies that
\begin{equation} \label{eq: uniform2} \sum_{\gamma \in \cup_{i=1}^s F_i} b^m_\gamma \rightarrow 0 \end{equation}
as $m \rightarrow \infty$ \textit{uniformly} over all such collections $(F_i)_{i=1}^s$.  Let $A_i = \sum_{\gamma \in F_i} a_\gamma$
and let $B^m_i =  \sum_{\gamma \in  F_i} b^m_\gamma$. Then \begin{align*}
\sum_{i=1}^s (A_i + B^m_i)^2 &= \sum_{i=1}^s (A_i^2 +( B^m_i)^2 + 2A_iB^m_i) \\
&\le  \sum_{i=1}^s A_i^2 + (\sum_{i=1}^s B^m_i)^2 + 2(\sum_{i=1}^s B^m_i) (\sum A_i^2)^{1/2}\\
&\le \sum_{i=1}^s A_i^2 +  (\sum_{i=1}^s B^m_i)^2 + 2\|x_n\|_{\alpha,k} \sum_{i=1}^s B^m_i\\
& \le \sum_{i=1}^s A_i^2 +  (\sum_{i=1}^s B^m_i)^2 + 2 \sum_{i=1}^s B^m_i.
\end{align*}   Note that  \eqref{eq: uniform2} implies that $\sum_{i=1}^s B^m_i \rightarrow 0$ as $m \rightarrow \infty$ uniformly over all such $(F_i)_{i=1}^s$.
Let $\varepsilon>0$.  It follows that  for all $m \ge M(n, \varepsilon)$,   \begin{equation}  \label{eq: atleast2}
\sum_{i=1}^s (A_i + B^m_i)^2 < \sum_{i=1}^s A_i^2 + \varepsilon \le \|x_n\|_{\alpha_k} + \varepsilon \le 1 + \varepsilon\end{equation} uniformly over all $(F_i)_{i=1}^s$.
Moreover, it follows  from \eqref{eq: sumsofsquares}  that for all $n \ge N(\varepsilon)$   \begin{equation*}
 \sum a_\gamma^2  < \varepsilon^2. \end{equation*}
Let $J\subset \operatorname{supp} x_n$. Consider disjoint sets $G_\lambda  \in \mathcal{F}^{(k)}_\alpha$ ($\lambda \in J$) satisfying $G_\lambda \cap \operatorname{supp} x_n = \{\lambda\}$ ($\lambda \in J$). Let $C^m_\lambda =  \sum_{\gamma \in  G_\lambda} b^m_\gamma$.  Then
for all $m> n> N(\varepsilon)$,
\begin{equation} \label{eq: exactlyone} \begin{split} \sum_{\lambda \in J} (a_\lambda + C^m_\lambda)^2 &\le \sum_{\lambda \in J} a_\gamma^2 
+ \sum_{\lambda \in J}(C^m_\lambda)^2
+ 2( \sum_{\lambda \in J} a_\lambda^2)^{1/2} (\sum_{\lambda\in J}(C^m_\lambda)^2)^{1/2}\\
&\le \varepsilon +  \sum_{\lambda \in J}(C^m_\lambda)^2 + 2\varepsilon \|x_m\|_{\alpha,k}\\
&\le \varepsilon + 2\varepsilon +  \|x_m\|_{\alpha,k}^2 \\ &\le 1 + 3\varepsilon. \end{split}
\end{equation}
Hence, combining  \eqref{eq: atleast2} and \eqref{eq: exactlyone}, for all $n \ge N(\varepsilon)$ and $m> M(n,\varepsilon)$,
\begin{equation} \|x_n + x_m\|_{\alpha,k}^2 \le  2 + 4\varepsilon.
\end{equation}  Since $\varepsilon>0$ is arbitrary, we have  \begin{equation} \label{eq: sqrt2}
\limsup_{n \rightarrow \infty} \limsup_{m \rightarrow \infty} \|x_n + x_m\|_{\alpha,k}  \le \sqrt{2},
\end{equation} which contradicts \eqref{eq: lim_m_n}. 
\end{proof} The following analogue for successor ordinals has a similar   (but simpler) proof.
\begin{lemma}\label{lem: successorcaseoflemma}  Let $\alpha = \beta^+$ be a successor ordinal.  Suppose that  $\|x_n\|_{\alpha, k} \le 1$ ($n\ge1$) and that \begin{equation}\label{eq: lim_m_n_2_successor}
 \lim_{m,n\rightarrow \infty}  \|x_m + x_n\|_{\alpha,k} = 2.
 \end{equation}   Then $$ \limsup_{n \rightarrow \infty} \|x_n\|_{\beta,k} >0.$$
\end{lemma}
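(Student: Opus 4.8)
The plan is to argue by contradiction, following the template of Lemma~\ref{lem: keyfirstlemma} but with a single lower family in place of the sequence $(\alpha_r)$. Suppose the conclusion fails, so that $\lim_{n \to \infty}\|x_n\|_{\beta,k} = 0$. Since $\mathcal{F}^{(k)}_\beta \supseteq \mathcal{F}_0$ contains all singletons, testing the norm with singleton sets shows that $\|\cdot\|_{\beta,k}$ dominates the $\ell_2(\overline{S})$ norm; hence $x_n \to 0$ in $\ell_2(\overline{S})$. As in Lemma~\ref{lem: keyfirstlemma}, a gliding-hump argument then lets us pass to a subsequence for which $\operatorname{supp} x_n$ is finite and the supports are pairwise disjoint, and we write $x_n = \sum a_\gamma e_\gamma$ with $\sum a_\gamma^2 \to 0$, and $x_m = \sum b^m_\gamma e_\gamma$ for $m > n$, all coefficients nonnegative.

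The one structural input that must be adapted is the control of sets $F \in \mathcal{F}^{(k)}_\alpha$ meeting $\operatorname{supp} x_n$ in at least two points. Fix $n$ and set $N = \max\{d(p,q) \colon p,q \in \operatorname{supp} x_n,\ p \ne q\}$; if $|F \cap \operatorname{supp} x_n| \ge 2$ then $F^\sharp \le N$. Because $|F| \ge 2$ forces $F \in \mathcal{F}_\alpha$ with $F^\sharp \ge k$, the successor hypothesis supplies a decomposition $F = \bigcup_{i=1}^{F^\sharp} F_i$ with $F_i \in \mathcal{F}_\beta$. Each $F_i \subseteq F$ satisfies $F_i^\sharp \ge F^\sharp \ge k$ (or $|F_i| \le 1$), so in fact $F_i \in \mathcal{F}^{(k)}_\beta$, and therefore $\sum_{\gamma \in F_i} b^m_\gamma \le \|x_m\|_{\beta,k}$. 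Summing over the at most $F^\sharp \le N$ pieces gives
\[
\sum_{\gamma \in F} b^m_\gamma \le \sum_{i=1}^{F^\sharp} \sum_{\gamma \in F_i} b^m_\gamma \le N\,\|x_m\|_{\beta,k}.
\]
Since $N$ depends only on $n$ and $\|x_m\|_{\beta,k} \to 0$, this yields the exact analogue of \eqref{eq: uniform1}: $\sum_{\gamma \in F} b^m_\gamma \to 0$ as $m \to \infty$, uniformly over all $F \in \mathcal{F}^{(k)}_\alpha$ with $|F \cap \operatorname{supp} x_n| \ge 2$.

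From this point the argument is word-for-word that of Lemma~\ref{lem: keyfirstlemma}. Disjoint sets in $\mathcal{F}^{(k)}_\alpha$ each meeting $\operatorname{supp} x_n$ in at least two points number at most $|\operatorname{supp} x_n|$, so $\sum_i B^m_i \to 0$ uniformly, producing \eqref{eq: atleast2}; the sets meeting $\operatorname{supp} x_n$ in exactly one point are controlled by $\sum a_\gamma^2 < \varepsilon^2$ exactly as in \eqref{eq: exactlyone}; and splitting an arbitrary norming family for $x_n + x_m$ into these two types and combining the estimates gives $\limsup_{n} \limsup_{m} \|x_n + x_m\|_{\alpha,k} \le \sqrt{2}$, contradicting \eqref{eq: lim_m_n_2_successor}.

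I expect the only delicate point to be the verification in the second paragraph, namely that each piece $F_i$ of the successor decomposition lies in $\mathcal{F}^{(k)}_\beta$ (via $F_i^\sharp \ge F^\sharp \ge k$) and that the number of pieces is bounded by $N$ uniformly in $F$. This is precisely what replaces the $r \vee k$ bookkeeping of the limit case, and it is what makes the successor argument simpler: all pieces live in the single lower family $\mathcal{F}^{(k)}_\beta$, whose norm tends to zero by the contradiction hypothesis, so no passage to finitely many distinct levels is required.
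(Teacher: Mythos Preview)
Your proposal is correct and is precisely the adaptation the paper has in mind: the paper omits the proof, remarking only that it is ``similar (but simpler)'' than that of Lemma~\ref{lem: keyfirstlemma}. Your second paragraph isolates exactly the simplification---a single lower family $\mathcal{F}^{(k)}_\beta$ together with the bound $F^\sharp \le N$ on the number of pieces in the successor decomposition---and the remainder is indeed verbatim the argument of Lemma~\ref{lem: keyfirstlemma}.
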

 \begin{remark}\label{rem: improvementoflemma} \eqref{eq: sqrt2}   shows that  Lemma~\ref{lem: keyfirstlemma} and Lemma~\ref{lem: successorcaseoflemma} can be strengthened  by replacing \eqref{eq: lim_m_n}  and \eqref{eq: lim_m_n_2_successor} by
$$\limsup_{n \rightarrow \infty} \limsup_{m \rightarrow \infty} \|x_n + x_m\|_{\alpha,k}  >  \sqrt{2}.$$
\end{remark}

The proof of the following lemma uses the fact that  Hilbert space $(\ell_2,|\cdot|)$ is uniformly convex; specifically,  for $0< \varepsilon < 2$,
\begin{equation} |x| \le1, |y| \le 1, |x-y| = \varepsilon \Rightarrow |\frac{x+y}{2}| \le 1 - \frac{\varepsilon^2}{8}. \end{equation}
We will also use the following notation: for $x =\sum_{\gamma \in \overline{S}} x_\gamma e_\gamma$ and disjoint sets  $F_i \subset \overline{S}$ ($1 \le i \le n$),
$$|(x; F_1,\dots,F_n)|_2:=(\sum_{i=1}^n (\sum_{\gamma \in F_i} x_\gamma)^2)^{1/2}.$$
 Note that if $x \ge 0$,  then
$$\|x\|_{\alpha,k} = \sup |(x;
 F_1,\dots,F_n)|_2,$$
where the supremum is taken over all $n \ge 1$ and disjoint $F_i \in \mathcal{F}^{(k)}_\alpha$.

 \begin{lemma}  \label{lem: limitordinalkeylemma}  Let $\alpha$ be a limit ordinal (with $\alpha_r \uparrow \alpha$ as above) and let $k \ge 1$. Suppose that  $\|x_n\|_{\alpha, k} \le 1$ ($n\ge1$), that \begin{equation}\label{eq: lim_m_n_2}
 \lim_{m,n\rightarrow \infty}  \|x_m + x_n\|_{\alpha,k} = 2,
 \end{equation}    and that  there exists $x \in \ell_2(\overline{S})$ such that, 
 for each  $r \ge 1$,
\begin{equation}\lim_{n \rightarrow \infty} \|x_n - x\|_{\alpha_r,r \vee k} = 0.
\end{equation} Then  $\lim_{n \rightarrow \infty} \|x_n - x\|_{\alpha,k}=0$.
\end{lemma}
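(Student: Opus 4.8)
The plan is to prove the equivalent statement $L:=\limsup_{n\to\infty}\|x_n-x\|_{\alpha,k}=0$. Write $y_n:=x_n-x$, so by hypothesis $\|y_n\|_{\alpha_r,r\vee k}\to0$ for every $r$, and in particular $|y_n|_2\to0$. As soft preliminaries I would first record that, since $B(\mathcal F_\alpha^{(k)})$ is reflexive and $\|x_n\|_{\alpha,k}\le1$, coordinatewise convergence upgrades to weak convergence $x_n\to x$; hence $x\in B(\mathcal F_\alpha^{(k)})$, $\|x\|_{\alpha,k}\le1$ by weak lower semicontinuity, and finite truncations of $x$ converge to $x$ in $\|\cdot\|_{\alpha,k}$.

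The engine is a splitting of the norm according to the spread $F^\sharp$. For $R\ge1$ set
\[ N_R(z)=\sup |(z;F_1,\dots,F_s)|_2 \quad\text{and}\quad q_R(z)=\sup |(z;F_1,\dots,F_s)|_2, \]
where the first supremum is over disjoint $F_i\in\mathcal F_\alpha^{(k)}$ that are singletons or satisfy $F_i^\sharp\le R$, and the second over disjoint $F_i\in\mathcal F_\alpha^{(k)}$ with $|F_i|\ge2$ and $F_i^\sharp>R$. Sorting the members of a near-optimal family for $\|z\|_{\alpha,k}$ into these two types gives $\|z\|_{\alpha,k}^2\le N_R(z)^2+q_R(z)^2$, and clearly $N_R,q_R\le\|\cdot\|_{\alpha,k}$. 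Two facts drive everything. First, for fixed $R$ one has $N_R(y_n)\to0$, because any low family splits into singletons and sets of size $\ge2$ (which then lie in some $\mathcal F_{\alpha_r}^{(r\vee k)}$ with $r\le F_i^\sharp\le R$), whence $N_R(y_n)^2\le|y_n|_2^2+\sum_{r\le R}\|y_n\|_{\alpha_r,r\vee k}^2\to0$. Second, $q_R(x)\to0$ as $R\to\infty$, since a finitely supported vector admits no set of size $\ge2$ with $F_i^\sharp>R$ once $R$ exceeds its diameter, and $x$ is a norm limit of such vectors.

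I would then derive two quadratic inequalities relating $\|x\|_{\alpha,k}$ and $L$. For the first, fix a finite $E$ and a subsequence along which $\|y_{n_j}\|_{\alpha,k}\to L$. The lower $2$-estimate \eqref{eq: lower2estimate} gives $\|x_{n_j}\|_{\alpha,k}^2\ge\|x_{n_j}|_E\|_{\alpha,k}^2+\|x_{n_j}|_{E^c}\|_{\alpha,k}^2$; letting $j\to\infty$ (using finite-dimensional convergence on $E$ and $\|y_{n_j}|_E\|_{\alpha,k}\to0$) and then $E\uparrow\overline S$ yields
\[ \|x\|_{\alpha,k}^2+L^2\le1. \]
For the second, apply the splitting to $x_m+x_n$: since $N_R(y_m+y_n)\to0$ one has $N_R(x_m+x_n)\to2N_R(x)\le2\|x\|_{\alpha,k}$, while $q_R(x_m+x_n)\le2q_R(x)+\|y_m+y_n\|_{\alpha,k}$. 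Because $y_n\to0$ in every component norm, the gliding-hump estimate \eqref{eq: sqrt2} of Lemma~\ref{lem: keyfirstlemma} (cf.\ Remark~\ref{rem: improvementoflemma}), applied to $y_n/(L+\varepsilon)$, gives $\limsup_n\limsup_m\|y_m+y_n\|_{\alpha,k}\le\sqrt2\,L$. Substituting into $\|x_m+x_n\|_{\alpha,k}^2\le N_R^2+q_R^2$, using $\|x_m+x_n\|_{\alpha,k}\to2$, and letting $R\to\infty$ so that $q_R(x)\to0$, I obtain $4\le4\|x\|_{\alpha,k}^2+2L^2$, i.e.\ $2\|x\|_{\alpha,k}^2+L^2\ge2$. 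The two inequalities force $\|x\|_{\alpha,k}=1$ and $L=0$, which is the assertion.

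The crux is the high-spread term in the second inequality: a sequence may vanish in every component norm while its full norm stays bounded away from $0$, and it is precisely the sublinear addition recorded in \eqref{eq: sqrt2} — the point at which uniform convexity of $\ell_2$ enters, through control of the cross terms between the persistent part $x$ and the escaping humps of $y_n$ — that prevents such mass from coexisting with $\|x_m+x_n\|_{\alpha,k}\to2$. The remaining delicate step is the sign bookkeeping in the gliding-hump estimate, handled as in Lemma~\ref{lem: keyfirstlemma} by passing to a subsequence with pairwise disjoint supports and invoking $1$-unconditionality; the sorting bounds for $N_R,q_R$ and the finite-dimensional limits in the first inequality are then routine.
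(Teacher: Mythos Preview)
Your overall architecture—derive $\|x\|_{\alpha,k}^2+L^2\le1$ from the lower $2$-estimate, and a reverse inequality from the $\sqrt2$-bound of Remark~\ref{rem: improvementoflemma}—is sound in spirit and does echo the paper's strategy. But the second inequality collapses at one specific point: your claim that $q_R(x)\to0$ as $R\to\infty$ is false. A set $F$ with $|F|\ge2$ and $F^\sharp>R$ has all its points far apart; for a finitely supported $x'$ of diameter $D<R$ this forces $|F\cap\operatorname{supp}x'|\le1$, but it does \emph{not} force $F\cap\operatorname{supp}x'=\emptyset$. Each such $F_i$ can pick up a single coordinate of $x'$, so $q_R(x')$ is bounded below by (and typically equals) $|x'|_2$, not $0$. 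Concretely, $q_R(e_p)=1$ for every $R$ whenever $\mathcal F_\alpha^{(k)}$ contains a pair $\{p,q\}$ with $d(p,q)>R$, which it does for any $\alpha\ge1$. Feeding $q_R(x)\approx|x|_2$ into your estimate yields only $4\le4\|x\|_{\alpha,k}^2+(2|x|_2+\sqrt2\,L)^2$, and together with $\|x\|_{\alpha,k}^2+L^2\le1$ this does not force $L=0$ (e.g.\ $\|x\|_{\alpha,k}=|x|_2=L=1/\sqrt2$ satisfies both).

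The paper avoids this by \emph{not} fixing a global threshold $R$. Instead it takes a near-optimal family $F_1,\dots,F_u$ for $2y+y_n+y_m$ and splits it according to how each $F_i$ intersects $\operatorname{supp}y$ and $\operatorname{supp}y_n$. The sets hitting both supports have $F_i^\sharp$ bounded by a quantity depending on $n$ (this is where your ``low-spread'' idea enters, but locally and adaptively), and their number is at most $|\operatorname{supp}y|$; on these, the $y_m$-contribution vanishes as $m\to\infty$. Crucially, uniform convexity of $\ell_2$ is applied \emph{to the family itself}, giving $|(y_n-y_m;F_1,\dots,F_u)|_2<\varepsilon$, which lets one convert the $y_n+y_m$ contribution on the sets touching $\operatorname{supp}y$ into a $y_n-y_m$ contribution plus a small error. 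This decoupling of $2y$ from $y_n+y_m$ within the \emph{same} norming family is what your global $N_R/q_R$ split cannot achieve, and it is the missing ingredient.
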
 \begin{proof} Note that 
$$\|x\|_{\alpha,k} \le \limsup_{n \rightarrow \infty} \|x_n\|_{\alpha,k}  \le 1,$$
since $x_n \rightarrow x$ pointwise.
 Suppose, to derive a contradiction, that the conclusion is false. Then, after passing to a subsequence and relabelling, we may assume that $$\lim_{n \rightarrow\infty}\|x_n - x\|_{\alpha,k} = \delta > 0.$$
Let $x_n^\prime=  x_n - x$. By assumption, for all $r \ge 1$,
 $$\lim_{n \rightarrow \infty}\|x_n^\prime\|_{\alpha_r, r\vee k} = 0.$$ 
Let $\varepsilon>0$. Choose a finitely supported  vector $y$ such that \begin{equation*} \|x - y\|_{\alpha,k} < \frac{\varepsilon^2}{10}. \end{equation*}
By a gliding hump argument, passing to a further subsequence and relabelling, we may choose disjointly supported  vectors $y_n$ ($n \ge 1$), each  with finite support disjoint from the support of $y$, such that $\|x_n^\prime - y_n\|_{\alpha,k} \rightarrow 0$ as $n \rightarrow \infty$ and, for all $m,n \ge 1$,
\begin{equation*} \|y + y_n||_{\alpha,k} \le 1, \end{equation*} and also
\begin{equation*} \|2y + y_n + y_m \|_{\alpha,k} > 2  -\frac{\varepsilon^2}{4}. \end{equation*}
Hence $$\lim_{n \rightarrow \infty}\| y_n\|_{\alpha,k} = \delta,$$
and, for all $r \ge 1$, \begin{equation} \label{eq:  nullsequence}
 \lim_{n \rightarrow \infty}\| y_n\|_{\alpha_r, r \vee k} =\lim_{n \rightarrow \infty}\|x_n^\prime\|_{\alpha_r, r\vee k} =0. \end{equation}
Without loss of generality, we may assume that $y\ge0$ and $y_n \ge 0$ for all $n \ge 1$.
Fix $n \ge 1$ and let $m>n$. Suppose that $2y + y_n + y_m$ is normed by disjoint sets $F_1,\dots,F_u$ in $\mathcal{F}_{\alpha,k}$ (we suppress the dependence of $F_i$ on $n$ and $m$ to simplify notation), i.e.,
$$|(2y + y_n + y_m; F_1,\dots,F_u)|_2 =  \|2y + y_n + y_m \|_{\alpha,k}> 2  -\frac{\varepsilon^2}{4}. $$
Since $$|(y+y_n; F_1,\dots,F_u)|_2 \le \|y+ y_n\|_{\alpha,k}\le 1$$ and $$|(y+y_m; F_1,\dots,F_u)|_2 \le \|y+ y_m\|_{\alpha,k}\le 1,$$
the uniform convexity of $\ell_2$ yields
$$|(y_n-y_m; F_1,\dots,F_u)|_2 < \varepsilon.$$

 We may assume that $F_1,\dots,F_s$ have nonempty intersection with \textit{both} $\operatorname{supp} y$ and   $\operatorname{supp} y_n$, that $F_{s+1},\dots,F_t$ intersect  $\operatorname{supp} y$ but \textit{not}
 $\operatorname{supp} y_n$, and that $F_{t+1},\dots,F_u$ do \textit{not} intersect   $\operatorname{supp} y$. Note that
$s \le |\operatorname{supp} y|$  and $|F_i \cap \operatorname{supp} (y + y_n)| \ge 2$ for $1 \le i \le s$. Hence, repeating the argument  used to prove \eqref{eq: uniform2}, we deduce that
\begin{equation} \label{eq: uniform3} \lim_{m\rightarrow\infty}  \sum_{\gamma \in \cup_{i=1}^s F_i} b^m_\gamma = 0 \end{equation}
 for $y_m = \sum b^m_\gamma e_\gamma$. Hence
$$|(y_m; F_1,\dots,F_s)|_2 < \frac{\varepsilon}{2}$$
for all  $m > M_1(n,\varepsilon)$.

Note that $y_n$ vanishes  on  $F_i$ for $s+1 \le i \le t$.
 Hence, for all $m > M_1(n,\varepsilon)$,  \begin{align*}
|(y_n+y_m; F_1,\dots,F_t)|_2  &=(\sum_{i=1}^s (\sum_{\gamma \in F_i} (b^n_\gamma + b^m_\gamma))^2 + \sum_{i = s+1}^t (\sum_{\gamma \in F_i} b^m_\gamma)^2)^{1/2} \\
&\le (\sum_{i=1}^s (\sum_{\gamma \in F_i} (b^n_\gamma -b^m_\gamma))^2 + \sum_{i = s+1}^t (\sum_{\gamma \in F_i} b^m_\gamma)^2)^{1/2} \\
& + 2 (\sum_{i=1}^s (\sum_{\gamma \in F_i} b^m_\gamma)^2)^{1/2}\\
 \intertext{(by the triangle inequality in $\ell_2$)}
&= |(y_n-y_m; F_1,\dots,F_t)|_2 + 2|(y_m; F_1,\dots F_s)|_2\\
&\le \varepsilon + \varepsilon = 2\varepsilon.
\end{align*}
So \begin{align*}
|(2y + y_n + y_m; F_1,\dots,F_t)|_2 &\le  2|(y;F_1,\dots, F_t)|_2 + |(y_n+y_m;F_1,\dots,F_t)|_2\\ &\le 2\|y\|_{\alpha,k} + 2\varepsilon. \end{align*}
Thus, \begin{equation} \label{eq: firstest}\begin{split}
(2 - \frac{\varepsilon^2}{4})^2 &< |(2y+y_n+y_m; F_1,\dots,F_u)|_2^2\\
&=  |(2y+y_n+y_m; F_1,\dots,F_t)|_2^2 +  |(y_n+y_m; F_{t+1},\dots,F_u)|_2^2\\
\intertext{(since $y$ vanishes on $F_i$ for $t+1 \le i \le u$)}
&\le ( 2\|y\|_{\alpha,k} + 2\varepsilon)^2 + \|y_n + y_m\|_{\alpha,k}^2.
\end{split}
\end{equation} Since $y$, $y_n$, and $y_m$ are disjointly supported, we have \begin{equation} \label{eq: secondest}
\|y\|_{\alpha,k}^2 + \|y_n\|_{\alpha,k}^2  \le \|y+y_n\|_{\alpha,k}^2  \le 1 \end{equation}
and \begin{equation} \label{eq: thirdest}
\|y\|_{\alpha,k}^2 + \|y_m\|_{\alpha,k}^2  \le \|y+y_m\|_{\alpha,k}^2  \le 1. \end{equation}
Combining \eqref{eq: firstest}, \eqref{eq: secondest}, and \eqref{eq: thirdest}, \begin{equation*} \begin{split}
4\|y\|_{\alpha,k}^2 +2(\|y_n\|_{\alpha,k}^2 + \|y_m\|_{\alpha,k}^2) &\le 4\\
& = (2 - \frac{\varepsilon^2}{4})^2 + \varepsilon^2 -\frac{\varepsilon^4}{16}\\
&\le  ( 2\|y\|_{\alpha,k} + 2\varepsilon)^2 + \|y_n + y_m\|_{\alpha,k}^2 + \varepsilon^2 \\
&\le 4\|y\|_{\alpha,k}^2 + \|y_n + y_m\|_{\alpha,k}^2  +(8\varepsilon +  5\varepsilon^2).
\end{split} 
\end{equation*} Hence for all $m>M_1(n,\varepsilon)$,  \begin{equation} \label{eq: 8epsilon}
\|y_n + y_m\|_{\alpha,k}^2 +(8\varepsilon +  5\varepsilon^2) \ge 2(\|y_n\|_{\alpha,k}^2 + \|y_m\|_{\alpha,k}^2).  \end{equation}
Now suppose $\varepsilon$ is chosen so that   $8\varepsilon +  5\varepsilon^2 < 2\delta^2$. 
Since $\lim_{n \rightarrow \infty} \|y_n\|_{\alpha,k} = \delta$, it follows from \eqref{eq: 8epsilon} that
$$\liminf_{n \rightarrow \infty} \liminf_{m \rightarrow\infty} \|y_n + y_m\|_{\alpha,k}  > \sqrt2 \delta.$$
which contradicts Remark~\ref{rem: improvementoflemma}  since, for all $r \ge 1$,
$$\lim_{n \rightarrow \infty} \|y_n\|_{\alpha_r, r \vee k} = 0.$$
\end{proof}  The following analogue for successor ordinals has a similar   (but simpler) proof.
\begin{lemma}  \label{lem: successorordinalkeylemma} Let $\alpha = \beta^+$ be a successor ordinal.  Suppose that  $\|x_n\|_{\alpha, k} \le 1$ ($n\ge1$), that \begin{equation}\label{eq: lim_m_n_2}
 \lim_{m,n\rightarrow \infty}  \|x_m + x_n\|_{\alpha,k} = 2,
 \end{equation}    and that  there exists $x \in \ell_2(\overline{S})$ such that 
\begin{equation}\lim_{n \rightarrow \infty} \|x_n - x\|_{\beta, k} = 0.
\end{equation} Then  $\lim_{n \rightarrow \infty} \|x_n - x\|_{\alpha,k}=0$.
\end{lemma}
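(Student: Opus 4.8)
The plan is to mirror the proof of Lemma~\ref{lem: limitordinalkeylemma} almost verbatim, replacing throughout the family $\mathcal{F}_{\alpha_r}^{(r \vee k)}$ and the norm $\|\cdot\|_{\alpha_r, r\vee k}$ by the single family $\mathcal{F}_\beta^{(k)}$ and the single norm $\|\cdot\|_{\beta,k}$, and deriving the final contradiction from Lemma~\ref{lem: successorcaseoflemma} in the strengthened form of Remark~\ref{rem: improvementoflemma} (rather than from Lemma~\ref{lem: keyfirstlemma}). Concretely, I would argue by contradiction. First note $\|x\|_{\alpha,k} \le 1$. Assuming the conclusion fails, I would pass to a subsequence with $\lim_n \|x_n - x\|_{\alpha,k} = \delta > 0$, set $x_n' = x_n - x$ so that $\lim_n \|x_n'\|_{\beta,k} = 0$ by hypothesis, approximate $x$ by a finitely supported $y$ with $\|x - y\|_{\alpha,k} < \varepsilon^2/10$, and run the same gliding-hump selection to obtain disjointly and finitely supported $y_n \ge 0$ (with supports disjoint from $\operatorname{supp} y$) satisfying $\|y + y_n\|_{\alpha,k} \le 1$, $\|2y + y_n + y_m\|_{\alpha,k} > 2 - \varepsilon^2/4$, $\lim_n \|y_n\|_{\alpha,k} = \delta$, and $\lim_n \|y_n\|_{\beta,k} = 0$.

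The only genuinely new ingredient, and the step I expect to be the crux, is the successor analogue of the uniform estimate \eqref{eq: uniform1}--\eqref{eq: uniform2}. Here I would invoke the defining structural property of $\mathcal{F}_\alpha$ for a successor ordinal: if $F \in \mathcal{F}_\alpha^{(k)}$ with $|F| \ge 2$, then $F^\sharp \ge k$ and there exist $F_i \in \mathcal{F}_\beta$ ($1 \le i \le F^\sharp$) with $F = \cup_{i=1}^{F^\sharp} F_i$. Since each $F_i \subseteq F$ has $F_i^\sharp \ge F^\sharp \ge k$ (or is a singleton), every piece in fact lies in $\mathcal{F}_\beta^{(k)}$; hence, for $y_m = \sum b^m_\gamma e_\gamma \ge 0$, one has $\sum_{\gamma \in F} b^m_\gamma \le \sum_{i=1}^{F^\sharp} \sum_{\gamma \in F_i} b^m_\gamma \le F^\sharp \|y_m\|_{\beta,k}$. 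When $|F \cap \operatorname{supp}(y+y_n)| \ge 2$ the number of pieces is controlled, $F^\sharp \le N := \max\{d(p,q) \colon p,q \in \operatorname{supp}(y+y_n)\} < \infty$, so $\sum_{\gamma \in F} b^m_\gamma \le N \|y_m\|_{\beta,k} \to 0$ as $m \to \infty$, uniformly over all such $F$. This is precisely the analogue of \eqref{eq: uniform1}, and since at most $|\operatorname{supp} y|$ disjoint such sets can occur simultaneously, the analogue of \eqref{eq: uniform2} follows. This is indeed the promised simplification over the limit case: instead of summing $\|\cdot\|_{\alpha_r, r\vee k}$ over $r \le N$, a single norm with the multiplicative factor $N$ suffices.

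With this in hand, the remainder transcribes the limit-ordinal argument without further change. Uniform convexity of $\ell_2$ gives $|(y_n - y_m; F_1,\dots,F_u)|_2 < \varepsilon$ for sets $F_1,\dots,F_u$ norming $2y + y_n + y_m$; splitting these sets according to whether they meet $\operatorname{supp} y$ and $\operatorname{supp} y_n$ and applying the estimate above yields $|(y_m; F_1,\dots,F_s)|_2 < \varepsilon/2$ and hence $|(2y + y_n + y_m; F_1,\dots,F_t)|_2 \le 2\|y\|_{\alpha,k} + 2\varepsilon$; combining this with the lower $2$-estimate \eqref{eq: lower2estimate} for the disjointly supported $y, y_n, y_m$ produces the successor analogue of \eqref{eq: 8epsilon}. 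Choosing $\varepsilon$ with $8\varepsilon + 5\varepsilon^2 < 2\delta^2$ then gives $\liminf_n \liminf_m \|y_n + y_m\|_{\alpha,k} > \sqrt 2\,\delta$. After rescaling by $\|y_n\|_{\alpha,k} \to \delta$ to normalize (so that $\|y_n\|_{\beta,k}/\|y_n\|_{\alpha,k} \to 0$), this contradicts the strengthened successor statement furnished by Remark~\ref{rem: improvementoflemma} applied to Lemma~\ref{lem: successorcaseoflemma}, completing the proof.
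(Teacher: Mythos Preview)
Your proposal is correct and matches the approach the paper intends: it says only that the proof is ``similar (but simpler)'' to that of Lemma~\ref{lem: limitordinalkeylemma}, and you have correctly identified both the adaptation (replace the family of norms $\|\cdot\|_{\alpha_r,r\vee k}$ by the single norm $\|\cdot\|_{\beta,k}$, and invoke Remark~\ref{rem: improvementoflemma} for Lemma~\ref{lem: successorcaseoflemma}) and the source of the simplification (the successor decomposition $F=\bigcup_{i=1}^{F^\sharp}F_i$ with $F_i\in\mathcal{F}_\beta^{(k)}$ and $F^\sharp\le N$ gives the uniform bound $\sum_{\gamma\in F}b^m_\gamma\le N\|y_m\|_{\beta,k}$ directly). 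Your observation that $F_i\in\mathcal{F}_\beta^{(k)}$ (not merely $\mathcal{F}_\beta$) because $F_i^\sharp\ge F^\sharp\ge k$ is exactly the point needed, and your final normalization step makes explicit a detail the paper leaves implicit at the end of the proof of Lemma~\ref{lem: limitordinalkeylemma}.
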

\begin{proof}[Proof of Theorem~\ref{thm: transfinite2R}]  We will prove the result for a fixed $\alpha$ and for all $k \ge 1$ by transfinite induction  on $\alpha$. The result clearly holds for $\alpha = 0$ since  $B(\mathcal{F}_0^{(k)}) = B(\mathcal{F}_0)=  \ell_2(\overline{S})$ for all $k \ge 1$.
So suppose the result holds for all $\beta<\alpha$ and for all $k \ge 1$.

\textbf{Case I}: $\alpha$ is a limit ordinal. So
$\mathcal{F}^{(k)}_\alpha = \cup_{r=1}^\infty \mathcal{F}^{(r \vee k)}_{\alpha_r},$ where $\alpha_r \uparrow \alpha$.  By inductive hypothesis, each 
$B(\mathcal{F}^{(r \vee k)}_{\alpha_r})$ admits an equivalent $2R$ norm $ |\!|\!|\cdot |\!|\!|_{\alpha_r, r \vee k}$. Note that 
$$ |\!|\!|\cdot  |\!|\!|_{\alpha_r, r \vee k} \le C_r \|\cdot\|_{\alpha, k}$$ for some $C_r < \infty$. Thus,
$$  |\!|\!|\cdot |\!|\!|_{\alpha,k}^2 := \|\cdot\|_{\alpha,k}^2 + \sum_{r=1}^\infty \frac{1}{2^rC_r^2} |\!|\!|\cdot |\!|\!|_{\alpha_r, r \vee k}^2$$
defines an equivalent norm $ |\!|\!|\cdot |\!|\!|_{\alpha,k}$ on $B(\mathcal{F}_\alpha^{(k)})$. Let us show that $ |\!|\!|\cdot |\!|\!|_{\alpha,k}$ is a $2R$ norm.
Suppose that $(x_n) \subset B(\mathcal{F}_\alpha^{(k)})$ satisfies
$$ \lim_{m,n \rightarrow \infty}  |\!|\!|x_n + x_m |\!|\!|_{\alpha,k}^2 - 2( |\!|\!|x_n |\!|\!|_{\alpha,k}^2 +  |\!|\!|x_m| |\!|\!|_{\alpha,k}^2) = 0.$$
Note that \begin{align*}  
& |\!|\!|x_n + x_m |\!|\!|_{\alpha,k}^2 - 2( |\!|\!|x_n |\!|\!|_{\alpha,k}^2 +  |\!|\!|x_m |\!|\!|_{\alpha,k}^2)\\ &\le -(\|x_n\|_{\alpha,k} -\| x_m\|_{\alpha,k})^2 - \sum_{r=1}^\infty \frac{1}{2^rC_r^2} ( |\!|\!|x_n |\!|\!|_{\alpha_r,k} - |\!|\!| x_m |\!|\!|_{\alpha+r,k})^2.
\end{align*} It follows that $\lim_{n \rightarrow \infty} \|x_n\|_{\alpha,k} = L$ for some $L \ge 0$, that \begin{equation} \label{eq: limitalphak}
\lim_{m,n \rightarrow \infty} \|x_n + x_m\|_{\alpha,k}^2 - 2(\|x_n\|_{\alpha,k}^2 + \|x_m\|_{\alpha,k}^2) = 0, \end{equation}
and that,  for all $r\ge1$,
 \begin{equation*}
\lim_{m,n \rightarrow \infty}  |\!|\!|x_n + x_m |\!|\!|_{\alpha_r,k \vee r}^2 - 2( |\!|\!|x_n |\!|\!|_{\alpha_r,k \vee r}^2 +  |\!|\!|x_m |\!|\!|_{\alpha_r,k \vee r}^2) = 0. \end{equation*}
Since each $ |\!|\!|\cdot  |\!|\!|_{\alpha_r, r \vee k}$ is a $2R$ norm, it follows from \eqref{eq: alternativedef} that  there exists $x \in \ell_2(\overline{S})$ such that, for all $r\ge 1$,
$$ \lim_{n \rightarrow \infty}  |\!|\!|x_n - x |\!|\!|_{\alpha_r, r \vee k} = 0.$$
 Moreover, \eqref{eq: limitalphak} implies that
$$ \lim_{m,n \rightarrow \infty}\| x_n + x_m \|_{\alpha,k} = 2L.$$ So, by Lemma~\ref{lem: limitordinalkeylemma},
$$\lim_{n \rightarrow \infty} \| x_n - x \|_{\alpha,k} = 0,$$
and hence $$\lim_{n \rightarrow \infty}  |\!|\!| x_n - x  |\!|\!|_{\alpha,k} = 0,$$
as desired. 

\textbf{Case II}: $\alpha= \beta^+$ is a successor ordinal.
The proof is very similar to the limit ordinal case. By the inductive hypothesis, $B(\mathcal{F}_\beta^{(k)})$ admits an equivalent $2R$ norm  $ |\!|\!| \cdot   |\!|\!|_{\beta,k}$. Let
$$  |\!|\!| \cdot |\!|\!|^2_{\alpha, k}= \|\cdot\|_{\alpha,k}^2  +  |\!|\!| \cdot |\!|\!|^2_{\beta,k}.$$
Using  Lemma~\ref{lem: successorordinalkeylemma} instead of  Lemma~\ref{lem: limitordinalkeylemma} and repeating the  argument  of  Case~I shows that    $|\!|\!| \cdot |\!|\!|_{\alpha, k}$ is a  $2R$ norm.
\end{proof} 

\section{$B(\mathcal{F})^*$ admits  an equivalent  $2R$ norm} \label{sec: duals}
Let $\mathcal{F}$ be a compact, hereditary family of finite subsets of an infinite set $\Gamma$ containing all singleton sets. We prove in Section~\ref{sec: reflexivity} that  $(B(\mathcal{F}),\|\cdot\|)$ is reflexive.
Day \cite{D}  introduced the  norm $\|\cdot \|_{\textrm{Day}}$ on $c_0(\Gamma)$  defined by 
$$\|\sum a_\gamma e_\gamma\|_{\textrm{Day}} = \sup (\sum_{i=1}^n 4^{-i} |a_{\gamma_i}|^2)^{1/2},$$
where the supremum is taken over all $n \ge 1$ and all  choices of distinct $\gamma_i\in \Gamma$ ($1 \le i \le n$). We define an equivalent norm
on   $B(\mathcal{F})^*$ thus:
$$  |\!|\!|x \  |\!|\!|^2 = \|x\|_*^2 +  \|x \|_{\textrm{Day}}^2 \qquad (x \in B(\mathcal{F})^*).  $$

 The following result is essentially  due to H\'ajek and Johannis.  It is a consequence of Theorem~3 and Corollary~4 of \cite{HJ} and the reflexivity of $B(\mathcal{F})^*$.
\begin{lemma}  \label{lem: HJresult} Suppose $(y_n) \subset B(\mathcal{F})^*$ satisfies \begin{equation} \label{eq: triplenormcondition}
 \lim_{m,n \rightarrow \infty} |\!|\!|y_n + y_m |\!|\!|^2- 2(|\!|\!|y_n|\!|\!|^2 + |\!|\!|y_m|\!|\!|^2) = 0. \end{equation}
Then there exists $y \in B(\mathcal{F})^*$ such that \begin{equation*}
 y_n\rightarrow y\quad \text{weakly as $n \rightarrow \infty$}\end{equation*} and
$$ \lim_{n \rightarrow \infty} \|y_n - y\|_\infty = 0.$$
\end{lemma}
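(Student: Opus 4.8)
The plan is to reduce Lemma~\ref{lem: HJresult} to the H\'ajek--Johanis machinery from \cite{HJ}. The quantity $|\!|\!|\cdot|\!|\!|^2 = \|\cdot\|_*^2 + \|\cdot\|_{\textrm{Day}}^2$ is a sum of two squared norms, so the hypothesis \eqref{eq: triplenormcondition} splits. First I would use the parallelogram-type estimate
\begin{equation*}
|\!|\!|y_n + y_m|\!|\!|^2 - 2(|\!|\!|y_n|\!|\!|^2 + |\!|\!|y_m|\!|\!|^2) \le -(\|y_n\|_* - \|y_m\|_*)^2 - (\|y_n\|_{\textrm{Day}} - \|y_m\|_{\textrm{Day}})^2,
\end{equation*}
valid because each squared norm $N^2$ satisfies $N(u+v)^2 \le 2N(u)^2 + 2N(v)^2$ by convexity of $t \mapsto t^2$ and the triangle inequality. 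Hence \eqref{eq: triplenormcondition} forces both
\begin{equation*}
\lim_{m,n} \|y_n + y_m\|_{\textrm{Day}}^2 - 2(\|y_n\|_{\textrm{Day}}^2 + \|y_m\|_{\textrm{Day}}^2) = 0
\end{equation*}
and the analogous statement for $\|\cdot\|_*$, and in particular the scalar sequences $\|y_n\|_*$ and $\|y_n\|_{\textrm{Day}}$ converge.

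Next I would extract the two desired conclusions from the two split conditions separately. For the weak convergence, I would invoke the reflexivity of $B(\mathcal{F})^*$ (proved in Section~\ref{sec: reflexivity}): the condition on $\|\cdot\|_*$ shows, via James's characterization as in the $W2R$ theory, that $(y_n)$ has a weak limit $y$; more directly, since $B(\mathcal{F})^*$ is reflexive the bounded sequence $(y_n)$ has a weakly convergent subsequence, and the $2R$-type condition on the whole original triple norm (which descends to a $W2R$ property by the Odell--Schlumprecht / H\'ajek--Johanis results) pins the weak limit down to a single $y$, so $y_n \to y$ weakly along the full sequence. For the $\|\cdot\|_\infty$ convergence, I would use that the Day norm condition is exactly the hypothesis of the relevant result in \cite{HJ}: the Day norm on $c_0(\Gamma)$ is the canonical witness that $c_0(\Gamma)$-type spaces admit locally uniformly rotund renormings, and Theorem~3 together with Corollary~4 of \cite{HJ} assert that a sequence satisfying the displayed Day-norm condition is $\|\cdot\|_\infty$-convergent to its pointwise (hence weak) limit.

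The key structural point, which I would make explicit, is that the weak limit $y$ and the $\|\cdot\|_\infty$-limit must coincide: weak convergence in $B(\mathcal{F})^* \hookrightarrow c_0(\Gamma)$ implies coordinatewise (pointwise) convergence, and $\|\cdot\|_\infty$-convergence also implies pointwise convergence, so both limits agree coordinate by coordinate and are therefore equal. This is what lets me state both conclusions with the \emph{same} $y$.

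The main obstacle I anticipate is matching the precise hypotheses of Theorem~3 and Corollary~4 of \cite{HJ} to the split Day-norm condition, and verifying that the Day norm indeed plays the role required there (that is, that it supplies the pointwise-to-$\|\cdot\|_\infty$ upgrade). The convexity splitting and the identification of limits are routine; the delicate part is citing the external results with exactly the right normalization of the Day norm (the $4^{-i}$ weights and the supremum over distinct coordinates) so that the quantitative uniform-rotundity-in-every-direction estimate they use applies verbatim. Since the problem statement permits assuming earlier results and explicitly flags that this lemma ``is a consequence of Theorem~3 and Corollary~4 of \cite{HJ} and the reflexivity of $B(\mathcal{F})^*$,'' I would keep the argument short: split, apply reflexivity for weak convergence, apply \cite{HJ} for $\|\cdot\|_\infty$-convergence, and identify the two limits pointwise.
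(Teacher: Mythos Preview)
Your proposal is correct and follows exactly the approach the paper indicates: the paper does not give a proof at all but simply states that the lemma ``is a consequence of Theorem~3 and Corollary~4 of \cite{HJ} and the reflexivity of $B(\mathcal{F})^*$,'' and your outline---split the squared triple norm into its $\|\cdot\|_*$ and $\|\cdot\|_{\textrm{Day}}$ parts, use reflexivity for the weak limit and the H\'ajek--Johanis results for the $\|\cdot\|_\infty$ limit, then identify the two limits pointwise---is precisely how one unpacks that citation.
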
 Dualizing \eqref{eq: lower2estimate},  the dual space $(B(\mathcal{F})^*,\|\cdot\|_*)$
satisfies an upper $2$-estimate for disjointly supported vectorrs $x,y \in B(\mathcal{F})^*$: 
$$\|x + y\|_*^2 \le \|x\|_*^2 + \|y\|_*^2.$$ Moreover, for all $x \in B(\mathcal{F})^*$ and $F \in \mathcal{F}$,
$$\|x\cdot 1_F\|_* = \|x\cdot 1_F\|_\infty \le \|x\|_\infty.$$ \begin{lemma} \label{lem: normoflimit}  Suppse that  $y$ and $y_n$ have disjoint finite supports ($n \ge 1$), that $$\|y\|_* = \|y_n\|_* =1 \qquad(n \ge 1),$$
and that $$\lim_{n \rightarrow \infty} \|y_n\|_\infty = 0.$$
Then, for all $\delta > 0$,
$$ \lim_{n \rightarrow \infty} \|y+\delta y_n\|_* = (1 + \delta^2)^{1/2}.$$
\end{lemma}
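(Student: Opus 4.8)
The plan is to establish the two inequalities separately. The upper bound $\|y+\delta y_n\|_* \le (1+\delta^2)^{1/2}$ is immediate from the dual upper $2$-estimate, since $y$ and $\delta y_n$ are disjointly supported: $\|y + \delta y_n\|_*^2 \le \|y\|_*^2 + \delta^2 \|y_n\|_*^2 = 1 + \delta^2$. The entire content is therefore the matching lower bound $\liminf_{n} \|y+\delta y_n\|_* \ge (1+\delta^2)^{1/2}$.

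For the lower bound I would produce a near-optimal test vector in $B(\mathcal{F})$. Using $1$-unconditionality and disjointness I may assume $y, y_n \ge 0$. Fix $u \ge 0$ supported on the finite set $\operatorname{supp} y$ with $\|u\| = 1$ and $\langle y, u\rangle \ge 1 - \varepsilon$, and for each $n$ pick $w_n \ge 0$ supported on $\operatorname{supp} y_n$ with $\|w_n\| \le 1$ and $\langle y_n, w_n\rangle \ge 1 - \varepsilon_n$, where $\varepsilon_n \to 0$ (such vectors exist since $\|y\|_* = \|y_n\|_* = 1$). Set $z_n = u + \delta w_n$. Because the supports of $y$, $y_n$, $u$, $w_n$ separate, the cross pairings vanish and $\langle y + \delta y_n, z_n\rangle = \langle y, u\rangle + \delta^2\langle y_n, w_n\rangle \ge (1-\varepsilon) + \delta^2(1 - \varepsilon_n)$. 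It then suffices to show $\|z_n\| \le (1 + \delta^2)^{1/2} + o(1)$, for then $\|y+\delta y_n\|_* \ge \langle y + \delta y_n, z_n\rangle / \|z_n\|$ yields the claim after letting $n \to \infty$ and then $\varepsilon \to 0$.

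The main obstacle is that $\|\cdot\|$ satisfies only a lower $2$-estimate, not an upper one, so a priori $\|u + \delta w_n\|$ need not be close to $(\|u\|^2 + \delta^2\|w_n\|^2)^{1/2}$: a single set $F \in \mathcal{F}$ may combine mass from $\operatorname{supp} u$ and $\operatorname{supp} w_n$ (as witnessed by $\|e_m + e_n\| = 2$ in the original Baernstein space), producing a large cross term. The key observation that removes this obstacle is that the hypothesis $\|y_n\|_\infty \to 0$ forces every near-norming $w_n$ to be spread; precisely, I claim $M_n := \sup_{F \in \mathcal{F}}\sum_{\gamma \in F}(w_n)_\gamma \to 0$. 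Indeed, for $F \in \mathcal{F}$ put $\sigma = \sum_{\gamma \in F}(w_n)_\gamma = \|w_n \cdot 1_F\|$; splitting $w_n = w_n\cdot 1_F + w_n\cdot 1_{F^c}$ and using $\langle y_n, w_n\cdot 1_F\rangle \le \|y_n\|_\infty\,\sigma$ together with $\langle y_n, w_n\cdot 1_{F^c}\rangle \le \|w_n\cdot 1_{F^c}\| \le (1 - \sigma^2)^{1/2}$ (the last step by the lower $2$-estimate $\|w_n\|^2 \ge \sigma^2 + \|w_n\cdot 1_{F^c}\|^2$), the near-norming bound $1 - \varepsilon_n \le \langle y_n, w_n\rangle$ gives $\sigma \le (1 - (1 - \varepsilon_n - \|y_n\|_\infty)^2)^{1/2} \to 0$, uniformly over $F$.

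With $M_n \to 0$ in hand, the cross term is controlled by the finiteness of $\operatorname{supp} y$. For any disjoint family $(H_j)$ in $\mathcal{F}$, at most $p := |\operatorname{supp} y|$ of the $H_j$ meet $\operatorname{supp} u$, and on each such set the $w_n$-mass is at most $M_n$; writing $P_j = \sum_{\gamma \in H_j}u_\gamma$ and $Q_j = \sum_{\gamma \in H_j}(w_n)_\gamma$, Cauchy–Schwarz gives $\sum_j P_j Q_j \le (\sum_j P_j^2)^{1/2}(\sum_{j:\,H_j \cap \operatorname{supp} u \ne \emptyset} Q_j^2)^{1/2} \le \sqrt{p}\,M_n$, since $\sum_j P_j^2 \le \|u\|^2 = 1$. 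Expanding $\|z_n\|^2 = \sup_{(H_j)} \sum_j (P_j + \delta Q_j)^2 \le \|u\|^2 + \delta^2\|w_n\|^2 + 2\delta \sqrt{p}\,M_n \le 1 + \delta^2 + 2\delta\sqrt{p}\,M_n$ then yields $\|z_n\| \le (1+\delta^2)^{1/2} + o(1)$, completing the lower bound. I expect the spreading estimate $M_n \to 0$ to be the crux; everything else is bookkeeping with the two $2$-estimates and the fixed finite support of $y$.
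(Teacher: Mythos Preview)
Your proof is correct and follows the same overall architecture as the paper's: the upper bound is the dual upper $2$-estimate, and for the lower bound one pairs $y+\delta y_n$ against a test vector $u+\delta w_n$ built from (near-)norming vectors, after showing $\|u+\delta w_n\|^2 \to 1+\delta^2$. The one place the two arguments diverge is in how the cross terms are killed. The paper fixes the disjoint family $(F_i)_{i=1}^N$ that \emph{actually norms} $x+\delta x_n$, lets $F$ be the union of those $F_i$ that meet both supports (there are at most $|\operatorname{supp} x|$ of them), uses $\|y_n\cdot 1_{F_i}\|_*\le\|y_n\|_\infty$ to get $(x_n,y_n\cdot 1_F)\to 0$, and then the lower $2$-estimate forces $\|x_n\cdot 1_F\|\to 0$; once this piece is excised no remaining $F_i$ meets both supports and the norm decouples exactly. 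Your route instead proves the uniform spreading bound $M_n=\sup_{F\in\mathcal{F}}\sum_{\gamma\in F}(w_n)_\gamma\to 0$ directly from near-norming and the lower $2$-estimate, and then controls the cross term over \emph{every} disjoint $\mathcal{F}$-family by Cauchy--Schwarz. Both arguments rest on the same two facts---at most $|\operatorname{supp} y|$ sets in any disjoint $\mathcal{F}$-family can meet $\operatorname{supp} u$, and $\|y_n\|_\infty\to 0$---so the difference is packaging rather than substance; your spreading lemma is arguably a cleaner standalone statement, while the paper's removal of $x_n\cdot 1_F$ avoids the Cauchy--Schwarz step entirely.
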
 \begin{proof} We may assume that $y \ge0$ and $y_n \ge.$ Choose positive norming vectors $x, x_n \in B(\mathcal{F})$ with
$$(x,y) = \|x\| = \|y\|_* =1, \quad  (x_n,y_n)=\|x_n\|= \|y_n\|_*=1,$$ 
where $(\cdot,\cdot)$ denotes the duality pairing for $B(\mathcal{F})\times B(\mathcal{F})^*$. Note that $x$ and $x_n$ have disjoint finite supports ($n \ge 1$). Fixing $n \ge 1$,  choose disjoint  $F_i \in \mathcal{F}$ ($1 \le i \le N$) such that
$$\|x + \delta x_n\| = |(x+ \delta x_n; F_1,\dots,F_N)|_2.$$
We may assume that only $F_1,\dots,F_k$ have non-empty intersection with both $\operatorname{supp} x$ and $\operatorname{supp} x_n$.
Note that $$k \le M:= |\operatorname{supp} x|.$$ For each $1 \le i \le k$,
$$\|y_n \cdot 1_{F_i}\|_* \le \|y_n\|_\infty.$$ Hence
$$\sum_{i=1}^k \|y_n \cdot 1_{F_i}\|_* \le M \|y_n\|_\infty  \rightarrow 0 \quad\text{as $n \rightarrow \infty$}.$$
Let $F = \cup_{i=1}^k F_i$. (To simplify notation we  suppress the dependence of $F$ on $n$.) Then \begin{align*}
\|x_n - x_n \cdot 1_F\| &\ge (x_n -x_n\cdot 1_F, y_n)\\ &= (x_n,y_n -y_n\cdot 1_F)\\
&= 1 - (x_n, y_n\cdot 1_F)\\ &\ge 1 - \sum_{i=1}^k \|y_n \cdot 1_{F_i}\|_*\\
&\rightarrow 1 \quad \text{as $n \rightarrow \infty$.}
\end{align*}Hence $$\lim_{n \rightarrow \infty} (\|x_n\|^2 - \|x_n - x_n\cdot 1_F\|^2)= 1 - \lim_{n \rightarrow \infty}  \|x_n - x_n\cdot 1_F\|^2=0.$$
Since $(B(\mathcal{F},\|\cdot\|)$ satisfies a lower $2$-estimate, it follows that $\|x_n \cdot 1_F\| \rightarrow 0$ as $n \rightarrow \infty$.
Hence \begin{align*} 1  + \delta^2 &\le \liminf_{n \rightarrow \infty}\|x + \delta x_n\|^2\\
& \le  \limsup_{n \rightarrow \infty}\|x + \delta x_n\|^2\\
& = \limsup_{n \rightarrow \infty} |(x+\delta x_n; F_1,\dots F_N)|_2^2\\
&= \limsup_{n \rightarrow \infty}  |(x+ \delta x_n - \delta x_n \cdot 1_F; F_1,\dots,F_N)|_2^2\\
& \le   \limsup_{n \rightarrow \infty}[ \|x\|^2+ \delta^2 \| x_n - x_n \cdot 1_F\|^2]\\
\intertext{(since no $F_i$  intersects both $\operatorname{supp} x$ and $\operatorname{supp} (x_n - x_n \cdot 1_F)$)}
&= 1 + \delta^2.
\end{align*}  Thus, $$\lim_{n \rightarrow \infty}\|x+ \delta x_n\| ^2= 1 + \delta^2,$$
and hence \begin{align*}
(1 + \delta^2)^{1/2}  &\ge \limsup_{n \rightarrow \infty}  \|y + \delta y_n\|_*\\
\intertext{(since $\|\cdot\|_*$ satisfies an upper $2$-estimate)}
&\ge  \liminf_{n \rightarrow \infty}  \|y + \delta y_n\|_*\\
& \ge  \liminf_{n \rightarrow \infty} \frac{(x + \delta x_n, y + \delta y_n)}{\|x + \delta x_n\|}\\
&=  \frac{1 + \delta^2}{(1 + \delta^2)^{1/2}}\\ &=(1 + \delta^2)^{1/2}.
\end{align*} \end{proof}
\begin{theorem}\label{thm: dualrenorming} $|\!|\!|\cdot  |\!|\!|$ is an equivalent  $2R$ norm for  $B(\mathcal{F})^*$.
\end{theorem}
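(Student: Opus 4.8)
We must verify that $|\!|\!|\cdot|\!|\!|$, defined by $|\!|\!|x|\!|\!|^2 = \|x\|_*^2 + \|x\|_{\textrm{Day}}^2$, satisfies the $2R$ criterion \eqref{eq: alternativedef}: given $(y_n) \subset B(\mathcal{F})^*$ with $\|y_n\| \le 1$ (after normalizing, say) satisfying the triple-norm condition \eqref{eq: triplenormcondition}, we must produce $y$ with $|\!|\!|y_n - y|\!|\!| \to 0$. The plan is to start with Lemma~\ref{lem: HJresult}, which hands us a weak limit $y$ with $\|y_n - y\|_\infty \to 0$. Setting $y_n' = y_n - y$, I would then show $|\!|\!|y_n'|\!|\!| \to 0$. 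The Day-norm part is immediate: since $\|y_n'\|_\infty \to 0$ and the Day norm is dominated by a weighted $\sup$ that is controlled by $\|\cdot\|_\infty$ together with weak convergence to $0$ (the $4^{-i}$ weights make the Day norm continuous in the product topology on bounded sets), we get $\|y_n'\|_{\textrm{Day}} \to 0$. So the entire difficulty is concentrated in showing that the $\|\cdot\|_*$-component collapses, i.e. $\|y_n'\|_* \to 0$.

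The main obstacle, as in the primal case, is that $\|\cdot\|_*$ is \emph{not} itself $2R$, so weak-plus-sup-norm convergence does not automatically give $\|\cdot\|_*$-convergence; one must exploit the geometry of the dual norm. The strategy here is to argue by contradiction. Suppose $\|y_n'\|_* \not\to 0$; passing to a subsequence we may assume $\|y_n'\|_* \to \delta > 0$. Because $\|y_n'\|_\infty \to 0$ while $y_n' \to 0$ weakly, a gliding-hump argument lets me replace $y$ and the $y_n'$ (after a further subsequence and relabelling) by disjointly supported, finitely supported vectors $\tilde y$, $\tilde y_n$ with $\|\tilde y_n\|_\infty \to 0$ and with the norms $\|\tilde y_n\|_*$ bounded away from $0$. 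This is precisely the setting of Lemma~\ref{lem: normoflimit}.

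The decisive step is to invoke Lemma~\ref{lem: normoflimit} to compute the $\|\cdot\|_*$-norm of convex-type combinations of $\tilde y$ and $\tilde y_n$. The point of the triple-norm hypothesis \eqref{eq: triplenormcondition}, after extracting the $\|\cdot\|_*$-component (using that the cross terms from the Day norm vanish by the argument above and that $\|y_n\|_*, \|y_m\|_*$ converge), is that it forces $\|\cdot\|_*$ to behave as though it satisfied the parallelogram-type $2R$ identity along the sequence. But Lemma~\ref{lem: normoflimit} shows that for disjointly supported $\tilde y, \tilde y_n$ with $\|\tilde y_n\|_\infty \to 0$ one has $\lim_n \|\tilde y + \delta \tilde y_n\|_* = (1+\delta^2)^{1/2}$ rather than $2$, which quantifies how far apart the $y_n$ remain in $\|\cdot\|_*$ and contradicts the limit \eqref{eq: triplenormcondition} unless $\delta = 0$. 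Concretely, I would combine the lower bound $\|y_n - y_m\|_* $-type estimates coming from the disjointness (upper/lower $2$-estimates) against the limiting value supplied by the lemma to force a strict deficit in the triple-norm expression, contradicting that it tends to $0$.

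I expect the delicate bookkeeping to be the gliding-hump reduction and the careful separation of the Day-norm and $\|\cdot\|_*$ contributions in \eqref{eq: triplenormcondition}; the structural heart, however, is entirely carried by Lemma~\ref{lem: normoflimit}, whose conclusion $(1+\delta^2)^{1/2} < 1 + \delta$ (for $\delta > 0$) is exactly the strict convexity defect that rules out $\delta > 0$. Once $\|y_n'\|_* \to 0$ and $\|y_n'\|_{\textrm{Day}} \to 0$ are both established, we conclude $|\!|\!|y_n - y|\!|\!| \to 0$, which is the required strong convergence, completing the verification that $|\!|\!|\cdot|\!|\!|$ is $2R$.
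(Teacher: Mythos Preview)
Your proposal is correct and follows essentially the same approach as the paper: invoke Lemma~\ref{lem: HJresult} for the weak limit $y$ with $\|y_n-y\|_\infty\to 0$, argue by contradiction assuming $\|y_n-y\|_*\to\delta>0$, perform a gliding-hump reduction to disjointly and finitely supported $y',z_n'$, and then use Lemma~\ref{lem: normoflimit} together with the upper $2$-estimate to produce a strict negative deficit in the $\|\cdot\|_*$-component of \eqref{eq: triplenormcondition}. The paper's final computation is exactly the one you outline: $\lim_n\|y'+z_n'\|_*^2=\|y'\|_*^2+\delta^2$ from the lemma and $\limsup_{m,n}\|2y'+z_n'+z_m'\|_*^2\le 4\|y'\|_*^2+2\delta^2$ from the upper $2$-estimate, so the defect is at most $-2\delta^2<0$, a contradiction.
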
 \begin{proof} Suppose $(y_n) \subset B(\mathcal{F})^*$ satisfies \eqref{eq: triplenormcondition}.
By Lemma~\ref{lem: HJresult} there exists $y \in B(\mathcal{F})^*$ such that $y = w-\lim_{n \rightarrow \infty} y_n$
and $\lim_{n \rightarrow \infty}\|y_n - y\|_\infty =0$.  Suppose, to derive a contradiction, that $(y_n)$ does not converge strongly to $y$. 
Passing to a subsequence and relabelling, we may assume that $y_n = y+z_n$ where
$$\lim_{n \rightarrow\infty}  \|z_n\|_* = \delta>0, \lim_{n \rightarrow\infty}  \|z_n\|_\infty = 0,$$
and that  the following limits exist:
$$\lim_{n \rightarrow\infty}  \|y+z_n\|_*, \lim_{n,m \rightarrow\infty}  \|2y+z_n+z_m\|_* .$$
Let $\varepsilon>0$. By passing to a further subsequence, a gliding hump argument and the fact that $ \lim_{n \rightarrow\infty}  \|z_n\|_\infty = 0$
show that there exist vectors $y^\prime$ and $z_n^\prime$ ($n \ge 1$) with disjoint finite supports such that 
\begin{equation} \label{eq: approxi} \|y - y^\prime\|_* < \varepsilon, \lim_{n \rightarrow \infty}\|  z_n - z_n^\prime\|_* =0. \end{equation}
Note that \eqref{eq: triplenormcondition} implies that \begin{equation} \label{eq: limitnorm*}
 \lim_{m,n \rightarrow \infty} \|2y+ z_n + z_m \|_*^2- 2(\|y+z_n|\|_*^2 + \|y+z_m\|_*^2) = 0.  \end{equation}
Since $\lim_{n \rightarrow \infty} \|z^\prime_n\|_\infty = 0$,  Lemma~\ref{lem: normoflimit} yields
$$\lim_{n \rightarrow \infty} \|y^\prime + z_n^\prime\|_*^2 = \|y^\prime\|_*^2 +
 \lim_{n \rightarrow \infty}\|z_n^\prime\|_*^2  = \|y^\prime\|_*^2 + \delta^2.$$
Since $(B(\mathcal{F},\|\cdot\|_*)$ satisfies an upper $2$-estimate,
$$\lim_{n,m \rightarrow \infty} \|2y^\prime + z_n^\prime + z_m^\prime \|_*^2 \le 4\|y^\prime\|_*^2 + 2\delta^2. $$
Hence \begin{align*}
&\limsup_{n,m \rightarrow \infty}[\|2y^\prime + z_n^\prime + z_m^\prime \|_*^2 - 2(\|y^\prime + z_n^\prime\|_*^2 + \|y^\prime + z_m^\prime\|_*^2)]\\ &\le
 4\|y^\prime\|_*^2 + 2\delta^2 - 2(2\|y^\prime\|_*^2 + 2\delta^2)\\
&= -2\delta^2,
\end{align*} which contradicts \eqref{eq: approxi} and \eqref{eq: limitnorm*} provided $\varepsilon$ is sufficiently small.
\end{proof}
 Based on a family of sets introduced in \cite{BS},  Kutzarova and Troyanski \cite {KT}  constructed a Banach space $Y$ which  does not admit an equivalent  norm that is uniformly rotund or uniformly differentiable  in every direction. As an  application of our results, we show   that $Y$ does admit an equivalent $2R$ norm.
  \begin{corollary}  The Banach space $Y$ defined in \cite{KT} admits an equivalent $2R$ norm.
\end{corollary}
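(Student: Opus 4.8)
The plan is to identify the Kutzarova--Troyanski space $Y$ with a generalized Baernstein space covered by one of the two renorming theorems above, and then to invoke the fact that admitting an equivalent $2R$ norm is an isomorphic invariant. First I would recall the construction of $Y$ from \cite{KT}: it is built from the compact hereditary family $\mathcal{F}$ of finite subsets of an uncountable set $\Gamma$ introduced in \cite{BS}, and its norm is, up to isomorphism, the Baernstein-type norm \eqref{eq: Baernsteinnorm} associated with $\mathcal{F}$. The first task is therefore to produce an explicit linear isomorphism $T$ between $Y$ and $B(\mathcal{F})$ (or, should the construction turn out to be dual in nature, between $Y$ and $B(\mathcal{F})^*$).

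The second step is to locate $\mathcal{F}$ within the framework of this paper. I would first check that the family from \cite{BS} meets the standing hypotheses of the Introduction, namely that it contains all singletons, is hereditary, and is compact in $\{0,1\}^\Gamma$. I would then identify $\Gamma$ with $\overline{S} = S^{\mathbb{N}}$ and the combinatorial rank governing $\mathcal{F}$ with the first-difference metric $d$, and verify that under this identification $\mathcal{F}$ coincides with one of the transfinitely defined families $\mathcal{F}_\alpha$ for a suitable countable ordinal $\alpha$ (equivalently $\mathcal{F}_\alpha^{(1)}$, since $A^\sharp \ge 1$ for every nonempty $A$). Concretely this reduces to confirming the decomposition property built into the successor step: every $A \in \mathcal{F}$ with $|A| \ge 2$ can be written as a union of $A^\sharp$ members of the predecessor family.

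With the identification in hand the conclusion is immediate. By Theorem~\ref{thm: transfinite2R} (taking $k = 1$) the space $B(\mathcal{F}_\alpha)$ admits an equivalent $2R$ norm $|\!|\!|\cdot|\!|\!|$; in the dual variant, Theorem~\ref{thm: dualrenorming} supplies such a norm on $B(\mathcal{F})^*$ directly, with no need to place $\mathcal{F}$ in the hierarchy. In either case I would transport the norm through the isomorphism $T$ by setting $\|y\|_Y := |\!|\!|Ty|\!|\!|$. This is an equivalent norm on $Y$, and it is again $2R$: if $\|y_n\|_Y \le 1$ and $\|y_m + y_n\|_Y \to 2$, then $|\!|\!|Ty_n|\!|\!| \le 1$ and $|\!|\!|Ty_m + Ty_n|\!|\!| \to 2$, so $(Ty_n)$ converges strongly, whence $y_n = T^{-1}(Ty_n)$ converges strongly by continuity of $T^{-1}$. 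Thus $Y$ admits an equivalent $2R$ norm.

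I expect the entire difficulty to reside in the identification step. Reconciling the precise combinatorial definition of the family from \cite{BS} and the norm used in \cite{KT} with the hierarchy $\mathcal{F}_\alpha^{(k)}$ of Section~\ref{sec: transfinit} --- or, in the dual case, checking that $Y$ is isomorphic to $B(\mathcal{F})^*$ --- is where all the real work lies. Once the isomorphism $T$ and the membership of $\mathcal{F}$ in the hierarchy (or the dual identification) are established, the renorming theorems do everything, and the only remaining point is the routine observation that $2R$ is preserved under linear isomorphism.
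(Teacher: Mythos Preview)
Your identification step would fail because $Y$ is not isomorphic to a single Baernstein space or to a single dual; in \cite{KT} one has $Y = X \oplus X^*$, where $X$ is a closed subspace of $B(\mathcal{F}_1)$ for a certain family $\mathcal{F}_1$ of level $\alpha = 1$ in the hierarchy. So neither branch of your disjunction (``$Y \cong B(\mathcal{F})$'' or ``$Y \cong B(\mathcal{F})^*$'') is available, and invoking just one of the two renorming theorems cannot finish the argument.

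The paper therefore uses \emph{both} theorems together with three stability facts you do not mention. Theorem~\ref{thm: transfinite2R} gives a $2R$ norm on $B(\mathcal{F}_1)$, which restricts to a $2R$ norm on the subspace $X$; Theorem~\ref{thm: dualrenorming} gives a $2R$ norm on $B(\mathcal{F}_1)^*$, and since $X^*$ is a quotient of $B(\mathcal{F}_1)^*$ one needs the (easy) observation that a quotient norm of a $2R$ norm is $2R$; finally one checks that the $\ell_2$-sum of two $2R$ norms is $2R$, which yields the norm on $Y = X \oplus X^*$. Your final paragraph correctly handles the transport-by-isomorphism step, but the heart of the matter is the direct-sum structure of $Y$ and the need to renorm each summand separately.
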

\begin{proof}
The space $Y$ is defined as  $X \oplus X^*$, where $X$ is defined below.

 Let $S = \mathbb{N}$. Let $\mathcal{F}_1$ be the collection of all finite subsets  $F$ of $\overline{S}$ such that, if $|F| \ge 2$,  then for all $p \in F$,  $p(1)=1$ and   $p(i) \in \{1,2,\dots,i-1\}$ for all $i \ge 2$ and 
such that  for all distinct $p,q \in F$, there exists $m \ge 3$ such  that $p(i) = q(i)$ for all $1 \le i \le m-1$ and $p(m) \ne q(m)$, which implies that $F^\sharp = m-1$ and $|F| \le m-1$ as required.  The space $X$ is defined to be the closed linear span  of  $$\{e_p \colon p(1)=1, p(i) \in \{1,2,\dots,i-1\},  i \ge 2\}.$$ 
in $B(\mathcal{F}_1)$.
The successor case of the proof of Theorem~\ref{thm: transfinite2R} shows that 
$$ |\!|\!| \cdot |\!|\!|^2  = \|\cdot\|_1^2 + \|\cdot\|^2_{\ell_2(\overline{S})}$$
is an equivalent $2R$ norm on $B(\mathcal{F}_1)$. Hence $|\!|\!| \cdot |\!|\!|$ restricts to an equivalent $2R$ norm on $X$.
By Theorem~\ref{thm: dualrenorming}, $B(\mathcal{F}_1)^*$ admits an equivalent $2R$ norm. Note that $X^*$ is isomorphic to a quotient space of $B(\mathcal{F}_1)^*$. It is easily seen that a quotient norm of a $2R$ norm is $2R$. Hence $X^*$ admits an equivalent $2R$ norm,  $|\!|\!| \cdot |\!|\!|^\prime$ say. Finally,
$$\|(x,x^*)\| =\sqrt{|\!|\!|x|\!|\!|^2 + |\!|\!| x^* |\!|\!|^{\prime2}}\qquad ( (x,x^*) \in X \oplus X^*)$$
is an equivalent $2R$ norm on  $X \oplus X^* =Y$.
\end{proof}
 \section{$B(\mathcal{F})$ is reflexive} \label{sec: reflexivity}
\begin{theorem}  For arbitrary $\Gamma$ and $\mathcal{F}$,  $B(\mathcal{F})$ is reflexive.
\end{theorem}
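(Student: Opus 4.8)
The plan is to reduce to a countable index set and then invoke the classical theory of unconditional bases: I will show that $(e_\gamma)$ is both boundedly complete and shrinking, whence reflexivity follows from James's characterization \cite{J}. Every $x\in B(\mathcal F)$ is a norm limit of finitely supported vectors and so has countable support; hence any bounded sequence $(x_n)$ is supported on a countable set $C\subseteq\Gamma$. Setting $\mathcal F_C=\{F\in\mathcal F:F\subseteq C\}$, heredity together with the definition \eqref{eq: Baernsteinnorm} shows that the closed linear span of $\{e_\gamma:\gamma\in C\}$ is isometric to $B(\mathcal F_C)$ (one may always replace a norming set $F_i$ by $F_i\cap C\in\mathcal F$), and $\mathcal F_C$ is again compact, hereditary, and contains the singletons of $C$. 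Since reflexivity is separably determined, it suffices to prove that each $B(\mathcal F_C)$ with $C$ countable is reflexive. Thus I may assume $\Gamma=\mathbb N$, so that $(e_n)$ is a normalized $1$-unconditional Schauder basis, and by James's theorem it is enough to show that $(e_n)$ is boundedly complete and shrinking, equivalently that $B(\mathcal F)$ contains no isomorphic copy of $c_0$ or of $\ell_1$.

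Bounded completeness is the easy half and comes straight from the lower $2$-estimate \eqref{eq: lower2estimate}. If $\sup_n\|\sum_{i\le n}a_ie_i\|=M<\infty$ while the series failed to converge, a gliding-hump argument would produce disjointly supported blocks $y_1,y_2,\dots$ with $\|y_k\|\ge\delta>0$; then \eqref{eq: lower2estimate} and $1$-unconditionality give $\|\sum_{k\le N}y_k\|^2\ge\sum_{k\le N}\|y_k\|^2\ge N\delta^2$, whereas $\|\sum_{k\le N}y_k\|\le M$, a contradiction. The same estimate rules out any disjointly supported normalized sequence equivalent to the $c_0$-basis, so $c_0\not\hookrightarrow B(\mathcal F)$.

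The crux, and the only point where compactness of $\mathcal F$ is genuinely needed, is shrinking, i.e.\ that no normalized block basis $(u_k)$ (taken nonnegative by $1$-unconditionality) is equivalent to the $\ell_1$-basis. The subtle feature is that a single $F\in\mathcal F$ whose entries are "far out" can still meet the supports of many widely separated blocks, so there is no uniform bound on the number of blocks one $F$ can touch; what compactness supplies is that no single $F\in\mathcal F$ can contain a \emph{long} consecutive run of block-supports, for otherwise, selecting one coordinate from each block met and using heredity together with a diagonal argument, one would manufacture an infinite subset of $\Gamma$ all of whose finite subsets lie in $\mathcal F$, contradicting that $\{1_F:F\in\mathcal F\}$ is closed in $\{0,1\}^{\Gamma}$ and consists only of indicators of finite sets. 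The plan is then to pass to a rapidly spreading subsequence of $(u_k)$ and to test the putative lower estimate $\|\sum a_ku_k\|\ge c\sum|a_k|$ against a decaying combination $w_N=\sum_{k\le N}k^{-1}u_k$. Writing $\langle u_k,F\rangle=\sum_{\gamma\in F}(u_k)_\gamma$, for each block one has the per-block bound $\sum_j\langle u_k,F_j\rangle^2\le\|u_k\|^2=1$ over any disjoint family, while the compactness input is meant to force each individual norming set $F_j$ to collect only a controlled amount of the mass $\sum_kk^{-1}\langle u_k,F_j\rangle$. The decisive gain is that the \emph{outer} norm in \eqref{eq: Baernsteinnorm} is $\ell_2$ rather than $\ell_1$: capturing a total of order $\sum_{k\le N}k^{-1}$ then requires many pieces, and the $\ell_2$-aggregation makes $\|w_N\|$ grow strictly sub-linearly in $\sum_{k\le N}k^{-1}$ (essentially like its square root), contradicting $\|w_N\|\ge c\sum_{k\le N}k^{-1}\to\infty$. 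This $\ell_2$-over-$\ell_1$ effect is exactly what separates $B(\mathcal F)$ from the non-reflexive Schreier space, where the outer norm is $\ell_\infty$ and one admissible set already norms.

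I expect the last step to be the main obstacle: converting the qualitative compactness statement (no single $F$ contains a long run of supports) into the quantitative bound needed to propagate through the supremum defining $\|\cdot\|$, namely that after passing to a suitably spread subsequence each member of $\mathcal F$ interacts significantly with only boundedly many blocks in the relevant weighted sense. Making this uniform over all disjoint norming families $(F_j)$ is where the care lies; once it is in hand, the $\ell_2$-aggregation yields the contradiction and hence shrinking, completing the proof that $B(\mathcal F)$ is reflexive.
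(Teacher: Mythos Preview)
Your overall framework matches the paper's: reduce to countable $\Gamma$, show $(e_n)$ is boundedly complete via the lower $2$-estimate, show it is shrinking, then invoke James. The reduction and bounded completeness are fine. The gap is in shrinking, and you correctly flag it as the crux. Your plan --- extract from compactness a bound on how many block supports a single $F\in\mathcal F$ can meet, then exploit the outer $\ell_2$ against harmonic weights $w_N=\sum_{k\le N}k^{-1}u_k$ --- works for $\mathcal S_1$, where $\sum_{k\in F}1/k\le 1$ for every Schreier set, but already for $\mathcal S_2$ one has $\sup_{F\in\mathcal S_2}\sum_{k\in F}1/k=\infty$, so the harmonic weights fail. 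More generally, the ``consecutive run'' statement your diagonal argument yields (no $F$ meets all of $E_1,\dots,E_N$ for some $N$) is not uniform in the starting index, and turning it into the estimate $\|w_N\|=o(\sum_{k\le N}k^{-1})$ \emph{uniformly over all disjoint norming families} appears to require an induction on the Cantor--Bendixson rank of $\mathcal F$ with rank-adapted weights --- a substantial argument you have not supplied and only gesture at.

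The paper sidesteps this entirely. It builds a $4$-norming set $K\subset B(\mathcal F)^*$ of functionals of the form $\sum_r \pm 2^{-s(r)}1_{F_r}$ with $\sum_r 2^{-2s(r)}\le 1$ and disjoint $F_r\in\mathcal F$, and shows --- this is exactly where compactness of $\mathcal F$ is spent --- that $K$ is compact in the topology of pointwise convergence on $\ell_\infty$. The map $x\mapsto\hat x$, $\hat x(k)=k(x)$, then embeds $B(\mathcal F)$ isomorphically into $C(K)$; any coordinatewise-null bounded sequence $(x_n)$ satisfies $\hat x_n\to 0$ pointwise on $K$, hence weakly in $C(K)$ by the Riesz representation and bounded convergence theorems. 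Thus every bounded block basis of $(e_n)$ is weakly null and $(e_n)$ is shrinking. This $C(K)$/bounded-convergence device replaces your missing quantitative step with soft measure theory and handles arbitrary compact $\mathcal F$ at once, with no induction on complexity.
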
 \begin{proof} First, we consider the case $\Gamma = \mathbb{N}$. Let $(e_n)$ denote the unit vector basis of $B(\mathcal{F})$.

  Let $(F_i)_{i=1}^\infty \subset \mathcal{F}$  be a collection of disjoint elements of $\mathcal{F}$ and suppose that $\sum_{i=1}^\infty |a_i|^2 \le 1$. For $x = \sum_{i=1}^\infty x_i e_i  \in B(\mathcal{F})$,
\begin{equation}\label{eq: functionalrep} | \sum_{i=1}^\infty a_i(\sum_{j \in F_i} x_j)| \le  (\sum_{i=1}^\infty |a_i|^2)^{1/2} \|x\| \le \|x\|. \end{equation} Hence we  may identify   $\sum_{i=1}^\infty a_i 1_{F_i} \in \ell_\infty$  with  the element $k$ in the unit ball of $B(\mathcal{F})^*$ defined by \eqref{eq: functionalrep}.

Suppose $x \in B(\mathcal{F})$ has finite support  and that  $$\|x\| = (\sum_{i=1}^n (\sum_{j \in G_i} |x_j|) ^2)^{1/2}$$
for disjoint $G_i \in \mathcal{F}$. There exist  nonnegative  $a_1,\dots,a_n$ with $\sum_{i=1}^n a_i^2 = 1$  such that
$$  \sum_{i=1}^n a_i (\sum_{j \in G_i} |x_j|) = \|x\|,$$
and there exist $H_i\subseteq G_i$ ($1 \le i \le n$) such that 
$$\sum_{i=1}^n a_i|\sum_{j \in H_i} x_j| \ge \frac{1}{2}  \sum_{i=1}^n a_i (\sum_{j \in G_i} |x_j|) = \frac{1}{2} \|x\|.$$
Note that $H_i \in\mathcal{F}$ since $\mathcal{F}$ is hereditary.
 Hence the collection of linear functionals  with a representation  of the form 
$$k=\sum_{i=1}^\infty  a_i 1_{F_i} \in \ell_\infty\qquad (\sum_{i=1}^\infty | a_i|^2 =1,  (F_i)_{i=1}^\infty\, \text{disjoint sets in\,}   \mathcal{F})$$ is a $2$-norming
 set  for $B(\mathcal{F})$.
It follows that the discretized  collection $$K=\{\sum_{r=1}^\infty  \pm 2^{-s(r)} 1_{F_r} \colon \sum_{r=1}^\infty 2^{-2s(r)} \le 1 \}.$$
is  a $4$-norming set.

Let us show that $K \subset \ell_\infty$ is compact in the topology of pointwise convergence on $\ell_\infty$. For $n\ge1$, let
$$ k_n = \sum_{r=1}^\infty 2^{-r}( 1_{U^n_r} -  1_{V^n_r}),$$
where $U^n_r= \cup_{i=1}^{p(n,r)} F^n_i$ and $V^n_r=\cup_{i=1}^{q(n,r)} G^n_i$, and for each $n \ge 1$,  $$\{F^n_{r,i}, G^{n}_{r,j} \colon r\ge1, 1 \le i \le p(n,r), 1 \le j \le q(n,r)\}$$
is a  collection of nonempty disjoint elements of $\mathcal{F}$, and
$$\sum_{r=1}^\infty 2^{-2r}( p(n,r) + q(n,r)) \le 1.$$
In particular, $p(n,r) + q(n,r) \le 2^{2r}$ for all $n,r\ge 1$.
By a diagonal argument,   passing to a subsequence and relabelling, we may assume that
$$ p(n,r) = p_r, q(n,r) = q_r \quad\text{for all $n \ge r$}.$$ By compactness of $\mathcal{F}$, we may also assume that 
$$ \lim_{n \rightarrow \infty} F^n_{r,i} = F_{r,i}\quad (1 \le i \le  p_r),  \lim_{n \rightarrow \infty} G^n_{r,j} = G_{r,j} \quad (1 \le j \le q_r),$$
where  $$\{F_{r,i}, G_{r,j} \colon r\ge1, 1 \le i \le p_r, 1 \le j \le q_r\}$$
is a  collection of  disjoint (possibly empty) elements of $\mathcal{F}$ and
$$\sum_{r=1}^\infty 2^{-2r}( p_r + q_r) \le 1.$$ Set $F_r = \cup_{i=1}^{p_r} F_{r,i}$ and $G_r = \cup_{i=1}^{q_r} G_{r,i}$. It follows that
$$ k = \sum_{r=1}^\infty 2^{-r}(1_{F_r} - 1_{G_r}) \in K$$
and $k = \lim_{n \rightarrow \infty}  k_n$ pointwise in $\ell_\infty$. So $K$ is compact (and metrizable) in the topology of pointwise convergence.

For $x \in B(\mathcal{F})$,  define $\hat{x}\colon K \rightarrow  \mathbb{R}$ by $\hat{x}(k) = k(x).$ Suppose that $(k_n) \subset K$ and $k_n \rightarrow k$ pointwise in $\ell_\infty$. Clearly, 
 $\hat{x}(k_n) \rightarrow \hat{x}(k)$ when $x$ has finite support. Since the finitely supported vectors are norm-dense in $B(\mathcal{F})$, it follows that 
 $\hat{x}(k_n) \rightarrow \hat{x}(k)$ for all $x \in B(\mathcal{F})$, i.e., that   $\hat{x}$ is continuous on $K$.  Since $K$ is $4$-norming for $B(\mathcal{F})$, 
 the mapping  $x \mapsto \hat{x}$ defines a linear isomorphism from $B(\mathcal{F})$ onto a closed subspace of $C(K)$.

Suppose that $(x_n) \subset B(\mathcal{F})$ is  bounded and coordinatewise  null with respect to $(e_n)$. It follows from \eqref{eq: functionalrep} that
$$ \lim_{n\rightarrow \infty} \hat{x}_n(k) = 0 \qquad (k \in K).$$
Hence, by the Riesz representation  and  bounded convergence theorems, $\hat{x}_n \rightarrow 0$ weakly in $C(K)$.
In particular, if $x_n =\sum_{i = p_{n-1}+1}^{p_n}a_i e_i$, where $p_{n-1} < p_n$, is a bounded block basis of $(e_n)$, then $(\hat{x}_n)$ is weakly null in $C(K)$.
Hence $(x_n)$ is weakly null in $B(\mathcal{F})$, which implies that $(e_n)$ is a \textit{shrinking} basis.  On the other hand, since $(e_n)$ satisfies a lower $2$-estimate, it is \textit{boundedly complete}.
It follows from a theorem of James \cite{J2} that $B(\mathcal{F})$ is reflexive.

Next suppose that $\Gamma$ is uncountable. Let $\Gamma_0$ be a countably infinite subset of $\Gamma$. Then
$$X_0 = \{ x \in B(\mathcal{F}) \colon  \operatorname{supp} x \subseteq \Gamma_0\}$$
is the Baernstein space on $\Gamma_0$ corresponding to the family $\mathcal{F}_0 = \{F \cap \Gamma_0 \colon F \in \mathcal{F} \}$. By the first part of the proof, $X_0$ is reflexive. 
But every separable subspace of $B(\mathcal{F})$ is contained in $X_0$  for some $\Gamma_0$. Hence every separable subspace of $B(\mathcal{F})$ is reflexive, which implies that $B(\mathcal{F})$ is also reflexive
since reflexivity is separably determined.

\end{proof}

\end{document}